     \newtheorem{lemma}{\bf Lemma}[section]
     \newtheorem{theorem}{\bf Theorem}[section]
     \newtheorem{definition}{\bf Definition}[section]
     \newtheorem{remark}{\bf Remark}[section]
     \numberwithin{equation}{section}
\begin{document}
\title{{\LARGE Effective Boundary Conditions Arising from the Heat Equation with Three-dimensional Interior Inclusion}
 \footnotetext{
 E-mail addresses: gengxingri@u.nus.edu.\\ }}

\author{{Xingri Geng$^{a, b}$}\\[2mm]
\small\it $^a$ Department of Mathematics, Southern University of Science and Technology, Shenzhen, P.R. China\\
\small\it $^b$ Department of Mathematics, National University of Singapore,  Singapore 
}

\date{}

\maketitle
\begin{abstract}
We study the initial boundary value problem for a heat equation in a domain containing a thin layer. The thermal conductivity of the layer is drastically different from that of the bulk of the domain; moreover, the layer is anisotropic and ``optimally aligned" in the sense that the normal direction in the layer is always an eigenvector of the thermal tensor. To reveal the effects of the layer, we regard it as a thickless surface on which ``effective boundary conditions" (EBCs) are satisfied by the limit of solutions of the initial boundary value problem as the thickness of the layer shrinks to zero. These EBCs are rich in variety and type, including some nonstandard ones such as the Dirichlet-to-Neumann mapping and the fractional Laplacian.
\end{abstract}

\noindent{\bf Keywords.} thin layer, a priori estimates, asymptotic behavior, optimally aligned layers, effective boundary conditions.

\medskip

\noindent{\bf AMS subject classifications.} 35K05, 35B40, 35B45,74K35.

\section{Introduction}
We are concerned in this paper with a heat equation in a domain with an interior layer that is thin compared to the scale of the whole domain. Moreover, the thermal tensor inside the layer differs significantly from that outside. The multi-scales in the spatial size and the drastic difference in the thermal tensor inevitably lead to a computational burden. An example of this kind of situation is insulating an inner portion of a conducting body with a thin layer (see Figure \ref{fig}). One natural way to deal with situations like this is to treat the thin layer as a thickless surface, on which we impose ``effective boundary conditions'' (EBCs). These EBCs are not only helpful in numerical computations but also give us a straightforward assessment of the effects of the layer. 

We aim to derive these EBCs rigorously. The case when the domain is two-dimensional was already treated by Li and Wang in \cite{LW}. In this paper, we treat the case when the domain is three-dimensional and the interior layer is ``optimally aligned", which was not considered in \cite{LW}. Compared to the two-dimensional case, the EBCs in the three-dimensional case are richer in variety; the method used is different.

To set the stage, let $\Omega_\delta$ be a thin layer inside the bounded domain $\Omega=\overline{\Omega}_1\cup\overline{\Omega}_\delta\cup\Omega_2\subset \mathbb{R}^3$, as illustrated in Figure \ref{fig}. For any finite $T>0$, consider the initial boundary value problem 
\begin{equation*} \label{PDE}
    \left\{
             \begin{array}{lll}
             u_t-\nabla \cdot (A(x)\nabla u)=f(x,t), &\mbox{$(x,t)\in Q_T,$} \\
             u=0,     &\mbox{$(x,t)\in S_T,$}       &\tag{1.1}\\
             u=u_0, &\mbox{$(x,t)\in\Omega\times\{0\},$}  
             \end{array}
  \right.
 \end{equation*}
where $Q_T=\Omega \times (0, T)$  and  $S_T=\partial\Omega\times(0, T)$; $u_0 \in L^2(\Omega)$, $f\in L^2(Q_T)$, and the thermal tensor $A(x)$ is given by
\begin{equation*}\label{A}
    A(x)=\left\{
        \begin{array}{ll}
              k_1 I_{3\times3}, &x\in \Omega_1,\\
             (a_{ij}(x))_{3\times3}, &x\in\Omega_\delta, \\ 
              k_2I_{3\times3}, & x\in\Omega_2.
        \end{array}
    \right.\tag{1.2}
\end{equation*}
The positive definite matrix $ \left(a_{ij}(x) \right)$ satisfies anisotropy and optimally aligned conditions in $\Omega_\delta$ in the following sense 
\begin{equation*}\label{tensor}
\begin{split}
    A(x)\textbf{n}(p)=\sigma \textbf{n}(p), \quad  A(x)\textbf{s}(p)=\mu \textbf{s}(p),
\end{split}\tag{1.3}
\end{equation*}
where $p$ is the projection of $x$ on $\Gamma_1(=\partial\Omega_1)$; $\textbf{n}(p)$
is the unit normal vector of $\Gamma_1$ pointing out of $\Omega_1$ at $p$, and $\textbf{s}(p)$ is an arbitrary tangent vector on $\Gamma_1$ at $p$;
$\sigma$ and $\mu$ are two corresponding eigenvalues. 

The notion of optimally aligned layers was first proposed by Rosencrans and Wang \cite{RW} in 2006, in which $\sigma$ is called normal conductivity, and $\mu$ is called tangent conductivity. It is clear that if $\sigma=\mu$, then $A(x)$ is also isotropic in $\Omega_\delta$.

Throughout the paper, we shall assume that $\sigma = \sigma(\delta)$ and $\mu = \mu(\delta)$ are two positive functions of $\delta$; $k_1$ and $k_2$ are two positive constants independent of $\delta> 0$; both $\Omega$ and $\Omega_1$ are fixed and bounded with $C^2$ smooth boundaries $\partial\Omega$ and $\Gamma_1(=\partial\Omega_1)$, respectively. Furthermore, the thin layer $\Omega_\delta$ is uniformly thick with thickness $\delta$, and $\Gamma_2$ converges to $\Gamma_1$ as $\delta \to 0$. 

\begin{center}\label{fig}
\begin{tikzpicture}
\def\angle{60}%
\pgfmathsetlengthmacro{\xoff}{2cm*cos(\angle)}%
\pgfmathsetlengthmacro{\yoff}{1cm*sin(\angle)}%
\draw (0,0) circle[x radius=3cm, y radius=2cm] ++(-1*\xoff,1.8*\yoff) node{$\Omega_2$};
\draw (0,0) circle[x radius=2.2cm, y radius=1.2cm]  ++(1.8*\xoff,1*\yoff)  node{$\Gamma_2$}  ++(0*\xoff,0.9*\yoff)  node{$\partial\Omega$}   ;
\draw (0,0) circle[x radius=2cm, y radius=1cm] node{$\Omega_1$}
++(0*\xoff,-2.8*\yoff) node{Figure 1: $\Omega=\overline{\Omega}_1\cup\overline{\Omega}_\delta\cup\Omega_2.$} ++(1.8*\xoff,2.6*\yoff)  node{$\Gamma_1$};
\draw ++(-3*\xoff,1.2*\yoff)node{$\Omega_\delta$};
\draw [->] (-3,1.1)--(-2.1,0.1);

\fill (0,1) circle [radius = 1pt];
\draw ++(0*\xoff,0.8*\yoff) node{\textbf{p}};

\draw[->] (0,1)--(0,1.4);
\draw ++(0*\xoff,1.8*\yoff) node{\textbf{n}};
\draw[->] (0,1)--(0.4,1);
\draw ++(0.6*\xoff,1*\yoff) node{\textbf{s}};
\end{tikzpicture}
\end{center}

We now introduce several essential Sobolev spaces. Let $W^{1,0}_2(Q_T)$ be the subspace of functions of $L^2(Q_T)$ with first order weak derivatives in $x$ also in $L^2(Q_T)$; $W^{1,1}_2(Q_T)$ is defined similarly with the first order weak derivative in $t$ also in $L^2(Q_T)$. Moreover, $W^{1,0}_{2,0}(Q_T)$ is the closure in $W^{1,0}_2(Q_T)$ of $C^\infty$ functions vanishing near $\overline {S}_T$, and $W^{1,1}_{2,0}(Q_T)$ is defined similarly. Finally, denote
\begin{equation*}
    V^{1,0}_{2,0}(Q_T) = W^{1,0}_{2,0}(Q_T)\cap C  \left([0, T];L^2(\Omega)\right).
\end{equation*}

Let us add two more Sobolev spaces that will appear in the future sections: let $Q_T^1=\Omega_1\times(0, T)$ and $Q_T^2=(\Omega\backslash\overline{\Omega}_1)\times(0, T)$; let
\begin{equation*}
    \begin{split}
         \widetilde{W}^{1,1}_{2,0}(Q_T) &= \left\{ u\in L^2(Q_T)  \big| \quad u_1 = u|_{Q_T^1} \in W^{1,0}_{2}(Q_T^1), \quad u_2 =  u|_{Q_T^2} \in W^{1,0}_{2}(Q_T^2), \quad u_2|_{S_T} = 0 \right\};\\
        \widetilde{V}^{1,0}_{2,0}(Q_T) & = \Big\{ u\in L^2(Q_T) \big| \quad u_1 \in W^{1,0}_{2}(Q_T^1), \quad u_2 \in W^{1,0}_{2}(Q_T^2), \quad u_2|_{S_T} = 0, \quad u\in C([0, T]; L^2(\Omega))\Big\}.
    \end{split}
\end{equation*}
We endow all these Sobolev spaces with natural norms.

\medskip

To simplify notations, we replace $\int_0^T\int_{\Omega} u(x, t) dxdt$ by $\int_{Q_T} u(x, t) dxdt$.
\begin{definition}\label{def11}
A function $u$ is said to be a weak solution of (\ref{PDE}), if $u\in V^{1,0}_{2,0}(Q_T)$ and for any $\xi\in W^{1,1}_{2,0}(Q_T) $ satisfying $\xi=0$ at $t=T$, it holds that
\begin{equation*}\label{weaksol}
   \mathcal{A}[u,\xi]:= -\int_{\Omega}u_0\xi(x,0)dx+\int_{Q_T} \left( A(x)\nabla u \cdot \nabla\xi   -u\xi_t-f\xi \right) dxdt=0.  \tag{1.4}
\end{equation*}
\end{definition}

For any small $\delta > 0$, (\ref{PDE}) admits a unique weak solution $u \in W^{1,0}_{2}(Q_T)\cap C  \left([0, T];L^2(\Omega)\right)$. It is well known that the following ``transmission conditions" are satisfied in the weak sense,
\begin{equation*}\label{trans}
    \left\{
             \begin{array}{lr}
             u_1=u_\delta, \quad k_1\nabla u_1 \cdot \textbf{n}_1=\sigma \nabla u_\delta \cdot \textbf{n}_1 \quad \text{ on } \quad \Gamma_1,&  \\
             u_2=u_\delta, \quad k_2\nabla u_2 \cdot \textbf{n}_2=\sigma \nabla u_\delta \cdot \textbf{n}_2 \quad \text{ on } \quad \Gamma_2,& \end{array}\tag{1.5}
 \right.
\end{equation*}
where $u_1$, $u_\delta$, and $u_2$ are the restrictions of $u$ on $\Omega_1\times (0, T)$, $\Omega_\delta\times (0, T)$, and $\Omega_2\times (0, T)$, respectively; $\textbf{n}_1$ and $\textbf{n}_2$ are the outward unit normal vector of $\Gamma_1$ and $ \Gamma_2$, respectively.

\medskip

Denote 
\begin{equation}
              A_0(x)=\left\{
               \begin{array}{ll}
                 k_1,&  x\in \Omega_1, \\ \notag
                 k_2,& x\in\Omega \backslash \overline{\Omega}_1.
               \end{array}
               \right.
\end{equation}
The main purpose of this work is to obtain effective boundary conditions on $\Gamma_1 \times (0, T)$ for the Dirichlet problem (\ref{PDE}) as the thickness of the layer decreases to zero.

\begin{theorem}\label{thm:bigthm}
Suppose that $A(x)$ is given by (\ref{A}) and (\ref{tensor}), $u_0 \in L^2(\Omega)$, and $f \in L^2(Q_T)$, assume further that $\sigma$ and $\mu$ satisfy the following scaling relationships
\begin{equation*}
\begin{split}
     \lim_{\delta\to 0}\frac{\sigma}{\delta} =b\in[0,\infty], \quad
 \lim_{\delta\to 0}\sigma\mu =\gamma\in[0,\infty], \quad
 \lim_{\delta\to 0}\mu\delta=\beta\in[0,\infty].
\end{split}
\end{equation*}
If $u$ is the weak solution of (\ref{PDE}), 
then as $\delta \to 0$, $u\to v$ strongly in $C([0,T];L^2(\Omega))$, where $v$ is the weak solution of
\begin{equation}\label{EPDE}
    \left\{
             \begin{array}{llr}
             v_t-\nabla \cdot (A_0(x)\nabla v)=f(x,t), & (x,t)\in Q_T,&  \\
             v=0, & (x,t)\in S_T,& \\ \tag{1.6}
             v=u_0, & x \in \Omega, t=0, &  
             \end{array}
\right.
\end{equation}
subject to the effective boundary conditions listed in Table \ref{tb1} with $v_1$ and $v_2$ being the restrictions of $v$ on $\Omega_1 \times (0, T)$ and $\left(\Omega \backslash \overline{\Omega}_1\right) \times (0, T)$, respectively. 
\end{theorem}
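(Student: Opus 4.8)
The plan is to combine uniform (in $\delta$) energy estimates with a careful analysis of the solution inside the collapsing layer $\Omega_\delta$, exploiting the optimally aligned structure \eqref{tensor} to decouple the normal and tangential contributions of the layer energy. First I would test the weak formulation \eqref{weaksol} with (a Steklov-averaged-in-time version of) $u$ itself, which after a standard computation yields the basic energy bound
\[
\|u\|_{L^\infty(0,T;L^2(\Omega))}^2 + \int_{Q_T} A(x)\nabla u\cdot\nabla u\,\dx\,\dt \le C\bigl(\|u_0\|_{L^2(\Omega)}^2+\|f\|_{L^2(Q_T)}^2\bigr),
\]
with $C$ independent of $\delta$. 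Splitting the integrand over $\Omega_1,\Omega_\delta,\Omega_2$ and invoking \eqref{tensor} separately controls $\|\nabla u_1\|_{L^2(Q_T^1)}$, $\|\nabla u_2\|_{L^2(Q_T^2)}$, and the two weighted layer energies $\sigma\int_{\Omega_\delta\times(0,T)}|\partial_{\mathbf n}u|^2$ and $\mu\int_{\Omega_\delta\times(0,T)}|\nabla_{\mathbf s}u|^2$. These bounds produce, along a subsequence, weak limits $u_1\rightharpoonup v_1$ in $W^{1,0}_2(Q_T^1)$ and $u_2\rightharpoonup v_2$ in $W^{1,0}_2(Q_T^2)$, so that the limit $v$ belongs to $\widetilde V^{1,0}_{2,0}(Q_T)$.

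The heart of the argument is the layer analysis, carried out in normal--tangential coordinates over $\Gamma_1$. Writing the trace jump across the layer as a normal integral and applying Cauchy--Schwarz,
\[
\bigl|u_2|_{\Gamma_2}-u_1|_{\Gamma_1}\bigr|^2 \le \delta\int_0^\delta |\partial_{\mathbf n}u|^2\,\mathrm{d}n = \frac{\delta}{\sigma}\,\Bigl(\sigma\int_0^\delta|\partial_{\mathbf n}u|^2\,\mathrm{d}n\Bigr),
\]
so the admissible jump across the layer is governed by $\delta/\sigma\to 1/b$: when $b=\infty$ the traces must match ($v_1=v_2$), when $b\in(0,\infty)$ they are coupled through a flux, and when $b=0$ the two sides decouple. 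In parallel, integrating the tangential energy over the thin normal variable turns $\mu\int_{\Omega_\delta}|\nabla_{\mathbf s}u|^2$ into a surface quantity with weight $\mu\delta\to\beta$, which in the limit produces a Laplace--Beltrami (surface-diffusion) contribution $\beta\int_{\Gamma_1}|\nabla_{\mathbf s}v|^2$ on $\Gamma_1$. Thus $(b,\gamma,\beta)$ record, respectively, the surviving normal flux, the normal--tangential coupling, and the surface diffusion.

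With these ingredients I would pass to the limit in \eqref{weaksol}. The bulk integrals over $Q_T^1$ and $Q_T^2$ converge directly by pairing the weakly convergent gradients against the fixed gradient of the test function, and the terms $-\int_\Omega u_0\xi(x,0)$, $-\int_{Q_T}u\xi_t$, $\int_{Q_T}f\xi$ pass to the limit by weak $L^2$ convergence; the only delicate term is the layer integral, which I would recast in the rescaled slab $(0,1)\times\Gamma_1$ (stretching $n=\delta y$) and evaluate using the jump and surface-energy relations above. Matching the resulting surface functional against the transmission conditions \eqref{trans} then identifies, according to whether each of $b$ and $\beta$ is $0$, finite, or $\infty$, the effective boundary condition on $\Gamma_1\times(0,T)$ recorded in Table~\ref{tb1}. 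In the borderline regimes the rescaled layer equation reads $\partial_{yy}w+\tfrac{\beta}{b}\Delta_{\mathbf s}w=0$ on the slab, and the effective operator is the Dirichlet-to-Neumann map of this problem, which is nonlocal and of fractional-Laplacian type in the tangential variable; reading it off requires solving this auxiliary profile problem.

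The main obstacle is precisely this borderline case, where $b$ and $\beta$ are simultaneously nondegenerate: the normal and tangential mechanisms interact, the limiting surface operator ceases to be local, and one must establish a uniform two-sided control of the normal profile together with a compactness statement strong enough to carry the nonlocal operator through the limit and to identify it with the Dirichlet-to-Neumann/fractional operator. Finally, to upgrade weak to the asserted strong convergence in $C([0,T];L^2(\Omega))$, I would prove convergence of the energies and combine it with an Aubin--Lions--type argument, using the bound on $u_t$ in the dual norm furnished by the equation; weak convergence together with convergence of norms then yields strong convergence and, since the limit problem \eqref{EPDE} with its boundary condition is well posed, pins down the whole sequence rather than a subsequence.
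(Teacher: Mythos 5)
Your skeleton (uniform energy estimate, weak limits in the bulk, layer analysis governed by $b$, $\gamma$, $\beta$, identification of a slab Dirichlet-to-Neumann operator) coincides with the paper's, and your rescaled profile equation $\partial_{yy}w+(\beta/b)\Delta_{\Gamma_1}w=0$ is consistent with the paper's change of variables $r=R\sqrt{\sigma/\mu}$ (note $h^2=\mu\delta^2/\sigma\to\beta/b$). But there is a genuine gap precisely at the step you call ``the only delicate term.'' With the original test function $\xi$ kept in the layer, the normal part of the layer integral is $\int_0^T\int_{\Gamma_1}\int_0^\delta \sigma\, u_r\,\xi_r\,drdsdt$, and the basic energy estimate only gives $\sigma\int_{\Omega_\delta\times(0,T)}u_r^2\leq C$; Cauchy--Schwarz then yields a bound of order $\sqrt{\sigma\delta}$, which is useless in Case 3 where $\sigma/\delta\to\infty$ and $\sigma\delta$ need not tend to zero. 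The paper's central device, which your plan lacks, is to \emph{replace the test function inside the layer} by the bounded solution $\psi$ of $\sigma\psi_{rr}+\mu\Delta_{\Gamma_1}\psi=0$ matching the traces of $\xi$ on $\Gamma_1$ and $\Gamma_2$ (problem (\ref{AF})); integration by parts then converts the whole layer integral exactly into the boundary pairing $\sqrt{\sigma\mu}\int_{\Gamma_1}\left(\Psi_R(s,h)u(s,\delta,t)-\Psi_R(s,0)u(s,0,t)\right)ds$, whose asymptotics are read off from the explicit eigenfunction expansion (\ref{eq314}). Even with this device, the regime $\mu/\sigma\to 0$ in Case 3 requires the second-order estimates of Lemma \ref{est2} (hence $\Gamma_1\in C^3$), the transmission conditions (\ref{trans}), a trace theorem, and the extra hypothesis $\sigma\delta^3\to 0$ to show $\sigma\int_0^\delta u_r\,dr=k_2u_r(s,\delta^+,t)\,\delta+o(1)$; none of this is visible in your outline, and the appearance of $\sigma\delta^3\to0$ in Table \ref{tb1} would remain unexplained.

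Two further points. First, half of the table consists of \emph{one-sided} conditions (e.g.\ $b(v_2-v_1)=k_1\frac{\partial v_1}{\partial \textbf{n}}$, or $k_1\frac{\partial v_1}{\partial \textbf{n}}=\gamma\mathcal{J}_1^{\infty}[v_1]$ separately from the jump relation); these are obtained only by additionally testing with functions vanishing identically in $\overline{\Omega}_2$ (so $g_2\equiv0$), and in the regimes $\gamma=\infty$ or $\beta=\infty$ by dividing the weak identity by $\sqrt{\sigma\mu}$ or $\mu\delta$ and by testing with functions constant on $\Gamma_1$; your ``matching against the transmission conditions'' does not produce these. Second, for the asserted convergence in $C([0,T];L^2(\Omega))$ the paper needs the weighted bound $\int_{Q_T}t\,u_t^2\leq C$ of Lemma \ref{est1}(ii) for equicontinuity on $[t_0,T]$ and a separate approximation $u_0^n$ of the merely-$L^2$ initial datum to control the limit uniformly down to $t=0$; an Aubin--Lions argument with a dual-norm bound on $u_t$ gives compactness on $[t_0,T]$ but not, by itself, uniformity at $t=0$.
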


We now turn to the explanation of what we mean by the boundary conditions arising in Table \ref{tb1}. The dashed lines in Table \ref{tb1} indicate that such cases do not exist. 

The boundary condition $\nabla_{\Gamma_1} v =0$ on $\Gamma_1 \times (0, T)$ means that $v$ is a constant in the spatial variable, whereas it can be a function in the time variable, where $\nabla_{\Gamma_1}$ is the surface gradient on $\Gamma_1$. The boundary condition
\begin{equation*}
    \begin{split}
k_1\frac{\partial v_1}{\partial \textbf{n}}-k_2\frac{\partial v_2}{\partial \textbf{n}}=\beta \Delta_{\Gamma_1} v
    \end{split}
\end{equation*}
can be understood as a second-order partial differential equation on $\Gamma_1$, where $\Delta_{\Gamma_1}  = \nabla_{\Gamma_1} \cdot \nabla_{\Gamma_1}$ is the Laplace-Beltrami operator defined on $\Gamma_1$. This condition reveals that the thermal flux across $\Gamma_1$ in the outer normal direction is not equal to that in the inner normal direction, so the heat gets trapped and then diffuses on $\Gamma_1$ with diffusion rate $\beta$. 

$\mathcal{J}_1^{\beta/\gamma}$ and $\mathcal{J}_2^{\beta/\gamma}$ in Table \ref{tb1} are linear symmetric operators mapping the Dirichlet boundary value to the Neumann boundary value. In particular, for $H \in (0,\infty)$ and a smooth function $g$ on $\Gamma_1$, they are defined by 
\begin{equation*}
    \begin{split}
    \mathcal{J}_1^{H}[g](s) := \Psi_R(s, 0), \quad \mathcal{J}_2^{H}[g](s) := \Psi_R(s, H),
    \end{split}
\end{equation*}
where $\Psi(s, R)$ is the bounded solution of 
\begin{equation*}
    \left\{
             \begin{array}{ll}
                   \Psi_{RR}+\Delta_{\Gamma_1}      \Psi=0  & \Gamma_1\times(0, H),\\
                  \Psi (s, 0)=g(s) &      \Psi(s, H)= 0.
             \end{array}
  \right.
\end{equation*}
 
The analytic formulas for $\mathcal{J}_1^{H}[g]$ and $\mathcal{J}_2^{H}[g]$ are given in eigenfunctions of the Laplacian-Beltrami operator $-\Delta_{\Gamma_1}$ and thus deferred to Section \ref{sec 3}. With the help of these analytic formulas, we then define
\begin{equation*}
    \begin{split}
        \mathcal{J}_1^{\infty}[g](s) :=& \underset{H \to \infty}{\lim}\mathcal{J}_1^{H}[g](s), \quad \mathcal{J}_2^{\infty}[g](s) := \underset{H \to \infty}{\lim}\mathcal{J}_2^{H}[g](s).
    \end{split}
\end{equation*}

\begin{table}[!ht]
\caption{\label{tb1} Effective boundary conditions on $\Gamma_1\times (0, T)$ for the Dirichlet problem (\ref{PDE}).}

\centering Case 1. $ \frac{\sigma}{\delta} \to 0$ as $\delta \to 0$.

\medskip

\begin{tabular}{l|lll}
         
         As $\delta \to 0$ & \qquad $\gamma =0$ 
        &  $\gamma\in (0,\infty)$ 
        & $\gamma = \infty$\\
        \hline
        $\beta = 0$
        & \makecell {$ k_1\frac{\partial v_1}{\partial \textbf{n}}=0$, \\
        $ k_1\frac{\partial v_1}{\partial \textbf{n}}=k_2\frac{\partial v_2}{\partial \textbf{n}}$ \\
        }
        &  $------$ 
        &  $------$ \\
        \hline
        $\beta \in (0,\infty)$
      & \makecell {$ k_1\frac{\partial v_1}{\partial \textbf{n}}=0$, \\
        $ k_1\frac{\partial v_1}{\partial \textbf{n}}=k_2\frac{\partial v_2}{\partial \textbf{n}}$ }
      &   $------$
      &  $------$
      \\
      \hline
      $\beta = \infty$ 
      & $ k_1\frac{\partial v_1}{\partial \textbf{n}}=0$,
        $ k_1\frac{\partial v_1}{\partial \textbf{n}}=k_2\frac{\partial v_2}{\partial \textbf{n}}$ 
      & \makecell{ \quad$ k_1\frac{\partial v_1}{\partial \textbf{n}}=\gamma \mathcal{J}_1^{\infty} [v_1]$,\\ 
      $k_2\frac{\partial v_2}{\partial \textbf{n}}$
        $= -\gamma \mathcal{J}_1^{\infty} [v_2]$}
      &  \makecell{  $\nabla_{\Gamma_1} v_1 = \nabla_{\Gamma_1} v_2  =0$,\\
      $\int_{\Gamma_1} k_1\frac{\partial v_1}{\partial \textbf{n}}ds = 0$,\\
          $\int_{\Gamma_1} \left(k_1\frac{\partial v_1}{\partial \textbf{n}} - k_2\frac{\partial v_2}{\partial \textbf{n}}\right) ds = 0 $
           }\\
    \hline
\end{tabular}

\bigskip

\quad \qquad Case 2. $ \frac{\sigma}{\delta} \to b\in (0,\infty)$ as $\delta \to 0$.

\medskip

\begin{tabular}{l|lll}
        As $\delta \to 0$
        &  $\gamma =0$ 
        &  $\gamma\in (0,\infty)$ 
        & $\gamma = \infty$\\
        \hline
        $\beta = 0$
        & \makecell {$ k_1\frac{\partial v_1}{\partial \textbf{n}}=k_2\frac{\partial v_2}{\partial \textbf{n}}$,\\$b(v_2-v_1)=k_1\frac{\partial v_1}{\partial\textbf{n}}$ }
        &  $------$ 
        &  $------$ \\
        \hline
        $\beta \in (0,\infty)$
      &  $------$ 
      &  \makecell{
      $ k_1\frac{\partial v_1}{\partial \textbf{n}} =$\\
      $\gamma \mathcal{J}_1^{\beta /\gamma} [v_1]-\gamma \mathcal{J}_2^{\beta/\gamma} [v_2]$,\\
      $ k_2\frac{\partial v_2}{\partial \textbf{n}} =$\\
      $\gamma \mathcal{J}_2^{\beta /\gamma} [v_1]-\gamma \mathcal{J}_1^{\beta/\gamma} [v_2]$}
      &  $------$
      \\
      \hline
      $\beta = \infty$ 
      &  $------$
      & $------$
      & \makecell{$\nabla_{\Gamma_1} v_1 = \nabla_{\Gamma_1} v_2  =0$,\\
         $\int_{\Gamma_1} \left(k_1\frac{\partial v_1}{\partial \textbf{n}}-k_2\frac{\partial v_2}{\partial \textbf{n}}\right) ds=0 $,\\
         $\int_{\Gamma_1} \left( k_1\frac{\partial v_1}{\partial \textbf{n}} - b (v_2-v_1) \right) ds=0$} \\  
    \hline
\end{tabular}

\bigskip

\medskip

\qquad \qquad Case 3. $ \frac{\sigma}{\delta} \to \infty$ and $   \sigma \delta^3  \to 0$ as $\delta \to 0$. 

\medskip

\begin{tabular}{l|lll}
         As $\delta \to 0$
        &  $\gamma =0$ 
        &  $\gamma\in (0,\infty)$ 
        & $\gamma = \infty$\\
        \hline
        $\beta = 0$
        & \makecell {$v_1 = v_2$,
        $ k_1\frac{\partial v_1}{\partial \textbf{n}}=k_2\frac{\partial v_2}{\partial \textbf{n}}$ }
        &  \makecell {$v_1 = v_2$,
        $ k_1\frac{\partial v_1}{\partial \textbf{n}}=k_2\frac{\partial v_2}{\partial \textbf{n}}$ }
        & \makecell {$v_1 = v_2$,\\
        $ k_1\frac{\partial v_1}{\partial \textbf{n}}=k_2\frac{\partial v_2}{\partial \textbf{n}}$ }\\
        \hline
        $\beta \in (0,\infty)$
      &  $------$ 
      &  $------$ 
      & \makecell {$v_1 = v_2$,\\  $ k_1\frac{\partial v_1}{\partial \textbf{n}}-k_2\frac{\partial v_2}{\partial \textbf{n}}= \beta \Delta_{\Gamma_1} v $}
      \\
      \hline
      $\beta = \infty$ 
      &   $------$ 
      & $------$ 
      & \makecell{$ v_1 =  v_2  $,\\
            $\nabla_{\Gamma_1} v =0$,\\
         $\int_{\Gamma_1} \left(k_1\frac{\partial v_1}{\partial \textbf{n}}-k_2\frac{\partial v_2}{\partial \textbf{n}} \right) ds=0 $}   \\
    \hline
\end{tabular}
\end{table}

As we can see from Section \ref{sec 3}, $\mathcal{J}_1^{\infty}[g]$ and $\mathcal{J}_2^{\infty}[g]$ are also given as
\begin{equation*}
    \begin{split}
        \mathcal{J}_1^{\infty}[g](s) :=& - \left(- \Delta_{\Gamma_1} \right)^{1/2} g(s), \quad \mathcal{J}_2^{\infty}[g](s) := 0,
    \end{split}
\end{equation*}
where $\left(- \Delta_{\Gamma_1} \right)^{1/2} g(s)$ is the fractional Laplacian of order $1/2$ defined on $g$.

In the special case of $\sigma = \mu$, $A(x)$ is also isotropic in $\Omega_\delta$, which was studied by Li and Wang \cite{LW} in two dimensions. They derived EBCs that contain not only the usual Dirichlet, Neumann and Robin boundary conditions, but also some unusual ones including a Poisson equation and an integral equation. Because of the anisotropy of the thin layer in this paper, new EBCs which are not included in \cite{LW} emerge, involving the Dirichlet-to-Neumann mapping and the fractional Laplacian. Furthermore, we have to develop new estimates, and our approach is based on an auxiliary function originating from a harmonic extension. 

\begin{remark}
The condition $\sigma\delta^3\to 0$ in Table \ref{tb1} needs the smooth assumption of  $\Gamma_1 \in C^3$ and is necessary in the case that $\mu/\sigma \to 0$ as $\delta \to 0$. However, it is worth mentioning that we can eliminate this condition if $\mu / \sigma \to c \in (0,\infty]$ as $\delta \to 0$.
\end{remark}

\medskip

The past few decades witnessed developments in the idea of using EBCs, which produced lots of interesting results.  It was first recorded in the classic book of Carslaw and Jaeger \cite{HJ} in 1959 when they considered the heat equation in some simple cases. Subsequently, Sanchez-Palencia \cite{SP} first used such an idea rigorously in 1974 to study the interior reinforcement problem for elliptic and parabolic equations with a thin diamond-shaped inclusion layer. Later on,  Brezis, Caffarelli, and Friedman \cite{BCF} studied the elliptic problem in both interior and boundary reinforcement cases in 1980, followed by Buttazzo and Kohn \cite{BK} for the case of the rapid oscillating thickness of the coating. For more on the Poisson and heat equation, see \cite{CPW,LZ,LWZZ,LW2017,LSWW2021,LWZZ2009}. Furthermore, there is also a review paper of Wang \cite{XW} that paints a much more complete picture of the subject.

\medskip

The organization of this paper is as follows. Section \ref{sec 2} is devoted to establishing some a priori estimates and presenting some regularity results for the weak solution of (\ref{PDE}). Section \ref{sec 3} develops an auxiliary function via a harmonic extension, a bounded solution to an elliptic problem. In Section \ref{sec 4}, we apply the auxiliary function in Section \ref{sec 3} to problem (\ref{PDE}), deriving effective boundary conditions on $\Gamma_1 \times (0, T)$.

\section{Asymptotic behavior of the weak solution of (\ref{PDE})}\label{sec 2}
In this section, we investigate the asymptotic behavior of the weak solution of (\ref{PDE}) as the thickness of the layer shrinks.

\subsection{Weak solutions}
Before proceeding further,  we first define the weak solution of (\ref{EPDE}) with the effective boundary conditions mentioned in Table \ref{tb1} since there are some novel and unconventional results.  Although some of them have been discussed  in \cite{LW}, for the convenience of readers,  here we deal with these boundary conditions as well as the others.

\begin{definition}\label{def2}
$(1)$ $v$ is said to be a weak solution of (\ref{EPDE}) satisfying the boundary condition $v_1 = v_2$ together with $ k_1\frac{\partial v_1}{\partial \textbf{n}}-k_2\frac{\partial v_2}{\partial \textbf{n}}= \beta \Delta_{\Gamma_1} v $, if $v\in V^{1,0}_{2,0}(Q_T)$, and its trace on $\Gamma_1\times(0, T)$ belongs to $L^2 \left((0, T); H^1(\Gamma_1)\right)$, and it holds that
\begin{equation*}
\begin{aligned}
  && \mathcal{L}[v, \xi]& := -\int_{\Omega}u_0\xi(x,0)dx+\int_{Q_T} \left( A_0(x) \nabla v \cdot \nabla\xi  -v \xi_t-f\xi \right)dxdt\\
  &&\ &=-\beta \int_0^T\int_{\Gamma_1}\nabla_{\Gamma_1} v \cdot \nabla_{\Gamma_1} \xi dsdt,
\end{aligned}
\end{equation*}
for all $\xi\in C^\infty(\overline{Q}_T) $ with $\xi=0$ at $t=T$ and near $S_T.$

$(2)$ $v$ is said to be a weak solution of (\ref{EPDE}) satisfying the boundary condition $ v_1 =  v_2 $ together with $\nabla_{\Gamma_1} v =0$ and $\int_{\Gamma_1} \left(k_1\frac{\partial v_1}{\partial \textbf{n}}-k_2\frac{\partial v_2}{\partial \textbf{n}} \right) ds=0 $, if $v \in V^{1,0}_{2,0}(Q_T)$ and if for almost every fixed $t \in (0, T)$, its trace on $\Gamma_1$ is a constant, and it holds that
$$\mathcal{L}[v, \xi] = 0,$$
for all $\xi\in C^\infty(\overline{Q}_T)$ with $\xi=0$ at $t=T$ and near $S_T$, and $\nabla_{\Gamma_1} \xi = 0$ on $\Gamma_1$.

$(3)$ $v$ is said to be a weak solution of (\ref{EPDE}) satisfying the boundary condition $\nabla_{\Gamma_1} v_1 = \nabla_{\Gamma_1} v_2 =0$ together with $\int_{\Gamma_1} \left(k_1\frac{\partial v_1}{\partial \textbf{n}}-k_2\frac{\partial v_2}{\partial \textbf{n}}\right) ds = 0$ and $\int_{\Gamma_1} \left(k_1\frac{\partial v_1}{\partial \textbf{n}}-b(v_2 - v_1) \right)ds = 0$ for $b \in [0, \infty )$, if $v \in \widetilde{V}^{1,0}_{2,0}(Q_T)$ and if for almost every $t \in (0, T)$, its trace on $\Gamma_1$ is a constant, and it holds that
$$\mathcal{L}[v, \xi]= -  b\int_0^T\int_{\Gamma_1} (v_2-v_1)(\xi_2 - \xi_1) dsdt,$$
for any $\xi\in \widetilde{W}^{1,1}_{2,0}(Q_T) $with $\xi=0$ at $t=T$, and $\nabla_{\Gamma_1} \xi_1 = \nabla_{\Gamma_1} \xi_2 = 0$ on $\Gamma_1$.

\smallskip

$(4)$ $v$ is said to be a weak solution (\ref{EPDE}) satisfying the boundary condition 
$ k_1\frac{\partial v_1}{\partial \textbf{n}}=k_2\frac{\partial v_2}{\partial \textbf{n}}$ together with $b(v_2-v_1)=k_1\frac{\partial v_1}{\partial\textbf{n}}$, if $v\in  \widetilde{V}^{1,0}_{2,0}(Q_T)$, and it holds that
\begin{equation*}
    \mathcal{L}[v,\xi] = - b \int_0^T\int_{\Gamma_1} (\xi_2 - \xi_1)(v_2-v_1) dsdt,
\end{equation*}
for any test function $\xi\in \widetilde{W}^{1,1}_{2,0}(Q_T) $ with $\xi=0$ at $t=T$.

\smallskip

$(5)$ $v$ is said to be a weak solution of (\ref{EPDE}) satisfying the boundary condition  $k_1\frac{\partial v_1}{\partial \textbf{n}} = \gamma \mathcal{J}_1^{H} [v_1]-\gamma \mathcal{J}_2^{H} [v_2]$ together with
      $ k_2\frac{\partial v_2}{\partial \textbf{n}} =\gamma \mathcal{J}_2^{H} [v_1]-\gamma \mathcal{J}_1^{H} [v_2]$, if $v\in  \widetilde{V}^{1,0}_{2,0}(Q_T)$, where $H = \beta/\gamma$ or $\infty$, and it holds that
$$\mathcal{L}[v,\xi]= \gamma \int_0^T\int_{\Gamma_1}  \left( \mathcal{J}_1^{H} [v_1]-\mathcal{J}_2^{H} [v_2] \right)\xi_1 - \left( \mathcal{J}_2^{H} [v_1] - \mathcal{J}_1^{H} [v_2]\right)\xi_2 dsdt,
$$
for any test function $\xi\in \widetilde{W}^{1,1}_{2,0}(Q_T) $ with $\xi=0$ at $t=T$.
\end{definition}

The following theorem deals with the existence and uniqueness of the weak solution of (\ref{EPDE}) with the boundary conditions in Table \ref{tb1}.

\begin{theorem}
Suppose that $\Gamma_1 \in C^1$, $f \in L^2(Q_T)$, and $u_0 \in  L^2(\Omega) $. Then, $(\ref{EPDE})$ with any boundary condition in Table \ref{tb1} has one and only one weak solution $v$ as defined in Definition \ref{def2}.
\end{theorem}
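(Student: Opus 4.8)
The plan is to treat every entry of Table~\ref{tb1} within a single framework: each weak formulation in Definition~\ref{def2} is a linear parabolic problem whose spatial part is governed by a time-independent, symmetric, bounded bilinear form $a(\cdot,\cdot)$ on a suitable Hilbert energy space $V\hookrightarrow H=L^2(\Omega)$, and I would establish existence and uniqueness by the standard Galerkin scheme once $a$ is shown to satisfy a G\aa rding inequality. First I would fix, for each boundary condition, the space $V$ implicit in Definition~\ref{def2}: for case $(1)$, $V=\{v\in W^{1,0}_{2,0}(Q_T):v|_{\Gamma_1}\in L^2((0,T);H^1(\Gamma_1))\}$; for case $(2)$ the closed subspace of $V^{1,0}_{2,0}(Q_T)$, and for case $(3)$ the closed subspace of $\widetilde V^{1,0}_{2,0}(Q_T)$, consisting of functions whose trace(s) on $\Gamma_1$ are, for a.e.\ $t$, constant; and for cases $(4)$ and $(5)$ the full space $\widetilde V^{1,0}_{2,0}(Q_T)$, which permits a jump of the trace across $\Gamma_1$. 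Transferring the boundary terms of $\mathcal L$ to the left, the spatial form is $a(v,\xi)=\int_\Omega A_0(x)\nabla v\cdot\nabla\xi\,dx-B(v,\xi)$, where $B$ is the boundary bilinear form read off from the right-hand sides of Definition~\ref{def2}.

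For the standard rows the verification is routine: for case $(1)$, $-B(v,v)=\beta\int_{\Gamma_1}|\nabla_{\Gamma_1}v|^2\,ds\ge0$; for cases $(3)$ and $(4)$, $-B(v,v)=b\int_{\Gamma_1}(v_2-v_1)^2\,ds\ge0$; and for case $(2)$, $B\equiv0$. In each of these $B$ is symmetric, bounded on $V\times V$ by the trace theorem $H^1(\Omega)\to H^{1/2}(\Gamma_1)\hookrightarrow L^2(\Gamma_1)$, and of favourable sign, so that, using $A_0\ge\min\{k_1,k_2\}>0$, the principal term dominates $\|\nabla v\|_{L^2(\Omega)}^2$ and one obtains $a(v,v)+\lambda\|v\|_H^2\ge\alpha\|v\|_V^2$ (in the jump cases $v_1,v_2$ are estimated on $\Omega_1$ and $\Omega\setminus\overline\Omega_1$ separately, the $L^2$ term absorbing the purely Neumann component on $\Omega_1$ and the boundary term contributing non-negatively). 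Galerkin approximation in a basis of $V$, uniform energy estimates, and weak compactness then produce a solution in $L^2((0,T);V)\cap C([0,T];H)$ attaining $u_0$.

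The crux of the argument is case $(5)$, where $B(v,\xi)=\gamma\int_{\Gamma_1}\big[(\mathcal{J}_1^{H}[v_1]-\mathcal{J}_2^{H}[v_2])\xi_1-(\mathcal{J}_2^{H}[v_1]-\mathcal{J}_1^{H}[v_2])\xi_2\big]\,ds$ involves the Dirichlet-to-Neumann operators, and both its boundedness and its sign must be extracted from the spectral representation developed in Section~\ref{sec 3}. Separating variables in the cylinder problem against the orthonormal eigenbasis $\{\phi_k\}$ of $-\Delta_{\Gamma_1}$ with eigenvalues $\lambda_k\ge0$ exhibits $\mathcal{J}_1^{H}$ and $\mathcal{J}_2^{H}$ as the Fourier multipliers $-p_k$ and $-q_k$, where $p_k=\sqrt{\lambda_k}\coth(H\sqrt{\lambda_k})$ and $q_k=\sqrt{\lambda_k}/\sinh(H\sqrt{\lambda_k})$; in particular each $\mathcal{J}_i^{H}$ is self-adjoint on $L^2(\Gamma_1)$, so $B$ is symmetric. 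Writing $v_1|_{\Gamma_1}=\sum_k a_k\phi_k$ and $v_2|_{\Gamma_1}=\sum_k c_k\phi_k$, a direct computation collapses the boundary form to
\[
-\tfrac{1}{\gamma}B(v,v)=\sum_{k}\big[p_k(a_k^2+c_k^2)-2q_k a_k c_k\big]\ \ge\ \sum_{k}(p_k-q_k)(a_k^2+c_k^2)\ \ge\ 0,
\]
since $p_k>q_k\ge0$ for all $k$; hence $a(v,v)\ge\int_\Omega A_0|\nabla v|^2\,dx\ge0$. Because $p_k=O(\sqrt{\lambda_k})$, the same expansion shows $|B(v,\xi)|$ is controlled by the product of the $H^{1/2}(\Gamma_1)$ norms of the traces, so $B$ is bounded on $V\times V$ via the trace theorem. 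The limiting case $H=\infty$ is recovered from $\coth\to1$, $1/\sinh\to0$, giving $\mathcal{J}_1^{\infty}=-(-\Delta_{\Gamma_1})^{1/2}$ and $\mathcal{J}_2^{\infty}=0$ and the same non-negative quadratic form. This furnishes the G\aa rding inequality for case $(5)$ as well, so the Galerkin construction applies verbatim.

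Finally, uniqueness follows from the same coercivity. If $v,\tilde v$ solve one problem with identical data, then $w=v-\tilde v$ satisfies the homogeneous weak formulation in the same space $V$; choosing $\xi=w$ (legitimate after a Steklov averaging in $t$ to handle the term $\int v\xi_t$, and admissible because in the constrained cases $(2)$–$(3)$ the test class is exactly the constrained subspace containing $w$) yields $\tfrac12\tfrac{d}{dt}\|w(\cdot,t)\|_{L^2(\Omega)}^2+a(w,w)=0$. Since $a(w,w)\ge0$ and $w(\cdot,0)=0$, Gr\"onwall's inequality forces $w\equiv0$. I expect the main obstacle to be concentrated entirely in the sign and boundedness of the Dirichlet-to-Neumann form of case $(5)$; once the displayed spectral identity is established, every row of Table~\ref{tb1} fits the single Galerkin-plus-G\aa rding template, yielding existence and uniqueness.
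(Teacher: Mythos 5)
Your proposal is correct in substance, but it is worth noting that the paper itself does not carry out any argument here: its ``proof'' consists of an appeal to abstract parabolic theory with citations to \cite{LW}, \cite{CPW}, \cite{LM1972}, and \cite{W1987}, and all details are omitted. What you have written is essentially an execution of exactly that variational framework (Gelfand triple $V\hookrightarrow H\hookrightarrow V'$, Galerkin approximation, G\aa rding inequality), so the route is the one the paper intends rather than a different one; the value you add is that you actually verify the one ingredient that is \emph{not} covered by the cited references, namely the symmetry, $H^{1/2}$-boundedness, and sign of the Dirichlet-to-Neumann boundary form in case $(5)$ via the multipliers $p_k=\sqrt{\lambda_k}\coth(H\sqrt{\lambda_k})$ and $q_k=\sqrt{\lambda_k}/\sinh(H\sqrt{\lambda_k})$, which is consistent with the paper's formulas (3.16). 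Two small points to tidy up: for the zero eigenvalue $\lambda_0=0$ one has $p_0=q_0=1/H$, so your strict inequality $p_k>q_k$ fails there, but nonnegativity survives because the corresponding term is $\frac{1}{H}(a_0-c_0)^2\ge 0$ (and for $H=\infty$ the zero mode simply drops out); and since Definition \ref{def2} uses smooth test functions in cases $(1)$--$(2)$, your choice $\xi=w$ in the uniqueness step needs, besides the Steklov averaging in $t$ that you mention, a density argument for smooth functions in the constrained energy space. Neither point affects the validity of the argument.
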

\begin{proof}
The theorem can be proved by using the abstract parabolic theory. For a rigorous proof of this theorem, the reader can refer to \cite{LW} and \cite{CPW} (see also \cite{LM1972} and \cite{W1987}), and hence we omit the details.
\end{proof}

\medskip

Now, define a map $ X$ by
\begin{equation*}
    \Gamma_1 \times (0, \delta) \mapsto x = X(s, r) = \textbf{p}(s) + r \textbf{n}(s) \in \mathbb{R}^3,
\end{equation*}
where $\textbf{p}(s)$ is the projection of $x$ on $\Gamma_1$; $\textbf{n}(s)$ is the unit normal vector of $\Gamma_1$ pointing out of $\Omega_1$ at $\textbf{p}(s)$; $r$ is the distance from $x$ to $\Gamma_1$.

It is well known (\cite{GT}, Lemma 14.16) that for a small $\delta>0$, $X$ is a $C^1$ smooth diffeomorphism from $\Gamma_1 \times (0, \delta)$ to $\Omega_\delta$; $r = r(x)$ is a $C^2$ smooth function of $x$, which is the inverse of the mapping $x = X(s, r)$. Since $\Gamma_1$ is $C^2$ smooth, we parameterize the surface $\Gamma_1$ by a finite number of local charts with standard compatibility conditions. By using local coordinates $s=(s_1,s_2)$ in a typical chart on $\Gamma_1$, it holds  in $\overline{\Omega}_\delta$  that
\begin{equation*}\label{curvilinear}
    x=X(s,r)=X(s_1,s_2,r), \quad dx=[1+2H(s)r+\kappa(s)r^2]dsdr,  \tag{2.1}
\end{equation*}
where $ds$ represents the surface element; $H(s) $and $\kappa(s) $ represent mean curvature and Gaussian curvature at  $p$ on $\Gamma_1$, respectively. In the curvilinear coordinates, the Riemannian metric tensor at $x \in \overline{\Omega}_\delta$ induced from $ \mathbb{R}^3$ is defined as $G( s,r)$ with elements 
 $$g_{ij}(s, r)=g_{ji}(s, r) = < X_{i}, X_{j} >_{\mathbb{R}^3}, \quad i,j = 1,2,3,
 $$
where $X_i = X_{s_i}$ for $i =1,2$ and $X_3 = X_{r}$. Denote $| G |:= det G$ and let $g^{ij}(s,r)$ be the element of the inverse matrix of $G$, denoted by $G^{-1}$. 

Therefore, the derivatives in curvilinear coordinates and the formula of $A(x)$ in $\overline{\Omega}_\delta$ are presented as follows:
\begin{equation*}\label{derivative}
  \begin{split}
    \nabla u&=u_r \textbf{n} +\nabla_s u , \\
  \nabla_s u  :=\sum_{i,j=1,2} g^{ij}(s,r) u_{s_j}X_{s_i}(s,r)& \quad \text{ and } \quad \nabla_{\Gamma_1}u :=\sum_{i,j=1,2} g^{ij}(s,0)u_{s_j}\textbf{p}_{s_i}(s) ; 
\end{split} \tag{2.2}
\end{equation*}

\begin{equation*}\label{derivative2}
  \begin{split}
     \nabla \cdot \left(A(x)\nabla u\right) & = \frac{\sigma}{\sqrt{|G|}}\left(\sqrt{|G|}u_r \right)_r+\mu\Delta_{s}u,\\ 
 \Delta_{s}u = \nabla_s \cdot \nabla_s u =\frac{1}{\sqrt{|G|}}\sum_{ij=1,2}&\left(\sqrt{|G|}g^{ij}(s,r)u_{s_i}\right)_{s_j} \text{ and }  \Delta_{\Gamma_1}u = \nabla_{\Gamma_1} \cdot \nabla_{\Gamma_1} u\\
 A(x) = \sigma \textbf{n}(p)\otimes\textbf{n}(p)& + \mu \sum_{ij}g^{ij}(s,r)X_{s_i}(s,r)\otimes X_{s_j}(s,r).
\end{split} \tag{2.3}
\end{equation*}

\subsection{A priori estimates}
With the aid of the preceding curvilinear coordinates, we establish the following two estimates in this subsection. 

\smallskip

For ease of notation, let $C(T)$ denote a generic positive constant that solely depends on $T$, and let $O(1)$ also represent a quantity that may change from line to line but is independent of $\delta$.

\begin{lemma}\label{est1}
Suppose $f \in L^2(Q_T)$ and $u_0 \in L^2(\Omega).$ Then, any weak solution $u$ of (\ref{PDE}) satisfies the following inequalities.
\begin{equation*}
 \begin{split}
    (i)&\max_{t\in[0,T]}\int_{\Omega}u^2(x,t)dx+\int_{Q_T}\nabla u \cdot A\nabla u dxdt \leq C(T) \left(\int_{\Omega}u_0^2dx +\int_{Q_T}f^2dxdt \right),\\
    (ii) &\max_{t\in[0,T]} t\int_{\Omega}\nabla u \cdot A \nabla u dx+\int_{Q_T}t u_t^2dxdt\leq C(T)\left(\int_{\Omega}u_0^2dx +\int_{Q_T}f^2dxdt\right).
 \end{split}
\end{equation*}
\end{lemma}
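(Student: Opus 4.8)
The plan is to use the classical parabolic energy method, testing the weak formulation of (\ref{PDE}) against $u$ for part $(i)$ and against $tu_t$ for part $(ii)$. The one point requiring vigilance is that every constant must be independent of $\delta$; to guarantee this I keep the full quadratic form $\nabla u\cdot A\nabla u$ intact throughout and never replace it by $c|\nabla u|^2$ with a $\delta$-dependent ellipticity constant. Since the $\delta$-dependence of the problem resides entirely in $A$ inside $\Omega_\delta$, and $A$ enters these estimates only through the nonnegative form $\nabla u\cdot A\nabla u$, the resulting bounds will be uniform in $\delta$.

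For part $(i)$, I formally take $\xi=u$. Replacing the $u\xi_t$ term by $\tfrac12\tfrac{d}{dt}\int_\Omega u^2\,dx$ and using that $A$ is symmetric yields the energy identity
$$\frac{1}{2}\frac{d}{dt}\int_\Omega u^2\,dx+\int_\Omega \nabla u\cdot A\nabla u\,dx=\int_\Omega fu\,dx.$$
The second term is nonnegative by positive definiteness of $A$, and Young's inequality bounds the right-hand side by $\tfrac12\int_\Omega f^2\,dx+\tfrac12\int_\Omega u^2\,dx$. Integrating in $t$, absorbing, and applying Gronwall's inequality controls $\max_t\int_\Omega u^2\,dx$; feeding this back into the time-integrated identity then controls $\int_{Q_T}\nabla u\cdot A\nabla u\,dxdt$, with a constant depending only on $T$.

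For part $(ii)$, I test against $tu_t$. The crucial structural fact is that $A=A(x)$ is symmetric and independent of $t$, so $A\nabla u\cdot\nabla u_t=\tfrac12\partial_t(\nabla u\cdot A\nabla u)$, giving
$$\int_\Omega tu_t^2\,dx+\frac{1}{2}\int_\Omega t\,\partial_t(\nabla u\cdot A\nabla u)\,dx=\int_\Omega tfu_t\,dx.$$
Integrating over $(0,\tau)$ for arbitrary $\tau\in(0,T]$ and integrating the middle term by parts in time produces the endpoint contribution $\tfrac12\,\tau\int_\Omega\nabla u(\cdot,\tau)\cdot A\nabla u(\cdot,\tau)\,dx$ (the $t=0$ term vanishes) together with $-\tfrac12\int_0^\tau\int_\Omega\nabla u\cdot A\nabla u\,dxdt$; the latter is already controlled by part $(i)$. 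On the right I use $tfu_t\le\tfrac12 tu_t^2+\tfrac12 tf^2$ and absorb $\tfrac12 tu_t^2$ into the left. Taking the supremum over $\tau$ and bounding $t\le T$ on the forcing term yields $(ii)$, again with a $T$-dependent constant only.

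The main obstacle is that these computations are, as written, only formal: a weak solution $u\in V^{1,0}_{2,0}(Q_T)$ is not a priori an admissible test function in Definition \ref{def11}, and in $(ii)$ the quantity $u_t$ is not even known to be square-integrable before the estimate is proved. I would make both rigorous through a spatial Galerkin approximation in the eigenfunctions of the self-adjoint operator $-\nabla\cdot(A\nabla\,\cdot\,)$ with Dirichlet data, for which the two identities hold exactly and the manipulations above are fully legitimate; the estimates are then inherited by the weak limit through weak and weak-$*$ lower semicontinuity of the norms, and uniqueness of the weak solution identifies this limit with $u$. Alternatively, Steklov time-averaging applied directly to the weak formulation justifies the choice $\xi=u$ for $(i)$ and a regularized version of $tu_t$ for $(ii)$. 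Either route preserves the $\delta$-uniformity of the constants, which is precisely what the later asymptotic analysis requires.
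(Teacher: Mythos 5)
Your proposal is correct and takes essentially the same route as the paper: the paper's (very terse) proof also multiplies the equation by $u$ and by $tu_t$, integrates by parts in $x$ and $t$, and invokes a Galerkin approximation to make the formal computation rigorous. Your write-up simply supplies the details the paper omits, including the key observation that the constants stay $\delta$-uniform because $A$ enters only through the nonnegative form $\nabla u\cdot A\nabla u$.
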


\begin{proof}
Both estimates can be proved formally by a standard technique. Multiplying (\ref{PDE}) by $u$ and $t u_t$ separately, we use integration by parts in both the $t$ and the $x$ variables. Then, the same analysis on the Galerkin approximation of $u$, the details of which we omit, suggests that this argument can be made rigorous.
\end{proof}


\medskip

Next, fix a small $d$ such that $0 < 2 \delta< d$, and define 
\[
\widetilde{\sigma}=\begin{cases}
                  k_1,&   \quad -d\leq r\leq 0,  \\
               \sigma, &   \quad 0<r<\delta, \\  
              k_2,&  \quad \delta \leq r <d,
  \end{cases} \quad
\widetilde{\mu}=\begin{cases}
                  k_1,& \quad -d\leq r\leq 0, \\
               \mu, &   \quad 0<r<\delta, \\  
              k_2,&  \quad \delta \leq r <d.
  \end{cases}
\]
Denote the domain $\Omega_0 $ = $\Gamma_1\times(-d, d)$.
We are proceeding to provide higher order estimates.
\begin{lemma}\label{est2}
Suppose that $\Gamma_1 \in C^3$, $f \in L^2(Q_T)$, and $u_0 \in L^2(\Omega). $      Then, for any fixed $t_0>0$, the weak solution $u$ of (\ref{PDE}) satisfies 
\begin{equation*}\label{lemm2.21}
    \int^T_{t_0}\int_{\Gamma_1} \int_{-d}^d \left(\widetilde{\mu} (\Delta_s u)^2 + \widetilde{\sigma } \left|\nabla_s (u_r) \right|^2 \right) drdsdt \leq O(1) \left(1 + \frac{ \widetilde{\sigma} }{ \widetilde{\mu} } + \frac{1}{\widetilde{\mu}} \right), \tag{2.4}
\end{equation*}
and
\begin{equation*}\label{lemm2.22}
    \int^T_{t_0}\int_{\Gamma_1} \int_{-d}^{d}\widetilde{\sigma} u^2_{rr} drdsdt \leq O(1) \left(1 + \frac{1}{\widetilde{\sigma}} + \frac{ \widetilde{\mu} }{\widetilde{\sigma} } \right).\tag{2.5}
\end{equation*}
\end{lemma}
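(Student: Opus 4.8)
The plan is to derive both estimates by testing the equation, written in the curvilinear coordinates of (2.2)--(2.3), against carefully chosen tangential expressions and then absorbing the curvature and commutator terms using the basic energy bounds already available. Throughout I would work in the extended slab $\Omega_0 = \Gamma_1\times(-d,d)$, where the transmission conditions (1.5) let me treat the piecewise coefficients $\widetilde\sigma,\widetilde\mu$ as a single problem: since $u$ is continuous across $\Gamma_1$ and $\Gamma_2$ and the normal fluxes $\widetilde\sigma u_r$ match there, integrating by parts in $r$ across the interfaces produces no jump contributions. A spatial partition of unity subordinate to the charts on $\Gamma_1$ reduces matters to a single chart with local coordinates $s=(s_1,s_2)$, and a temporal cutoff $\zeta(t)$ with $\zeta\equiv0$ near $t=0$ and $\zeta\equiv1$ on $[t_0,T]$ lets me exploit the weighted bound $\int_{Q_T} t\,u_t^2\le O(1)$ of \lemref{est1}(ii): on $[t_0,T]$ this yields $\int u_t^2\le O(1)$, while \lemref{est1}(i) gives $\int\widetilde\sigma u_r^2+\int\widetilde\mu|\nabla_s u|^2\le O(1)$ and trivially $\int f^2\le O(1)$.

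For (2.4) I would test the weak form against $-\Delta_s u$ (localized by the cutoffs, and rigorously realized through tangential difference quotients so as not to presuppose the regularity being proved). Writing $A\nabla u=\sigma u_r\textbf{n}+\mu\nabla_s u$ and using the expression for $\nabla\cdot(A\nabla u)$ in (2.3), the principal part produces, after integrating by parts in $s$ (for the $\mu$ part) and in both $r$ and $s$ (for the $\sigma$ part), exactly the two good terms $\int\widetilde\mu(\Delta_s u)^2$ and $\int\widetilde\sigma|\nabla_s(u_r)|^2$ with the correct signs. The time and source terms are handled by the weighted Young inequality $|\int u_t\Delta_s u|\le\tfrac12\int u_t^2/\widetilde\mu+\tfrac12\int\widetilde\mu(\Delta_s u)^2$ and similarly for $f$; the second pieces are absorbed into the left-hand side, while the first pieces are bounded by $O(1)/\widetilde\mu$, supplying the $1/\widetilde\mu$ term. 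The commutators between $\partial_r$ and $\nabla_s$ together with the $r$-derivatives of $\sqrt{|G|}$ (bounded because $\Gamma_1\in C^3$) generate lower-order terms of schematic form $\int\widetilde\sigma|\nabla_s(u_r)|\,|\nabla_s u|$ and $\int\widetilde\sigma u_r\,|\nabla_s(u_r)|$; after Young, one half returns to $\int\widetilde\sigma|\nabla_s(u_r)|^2$ (absorbed) while the remainder is controlled by $\int\widetilde\sigma|\nabla_s u|^2\le\tfrac{\widetilde\sigma}{\widetilde\mu}\int\widetilde\mu|\nabla_s u|^2\le O(1)\,\widetilde\sigma/\widetilde\mu$, which is precisely the origin of the $\widetilde\sigma/\widetilde\mu$ term. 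Collecting these yields (2.4).

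Estimate (2.5) then follows algebraically from the equation rather than from a fresh energy identity. Solving (2.3) for the normal term gives $\widetilde\sigma u_{rr}=u_t-\widetilde\mu\Delta_s u-f-\widetilde\sigma\,\tfrac{(\sqrt{|G|})_r}{\sqrt{|G|}}u_r$ pointwise. Squaring, dividing by $\widetilde\sigma$, and integrating, the four contributions are bounded respectively by $\int u_t^2/\widetilde\sigma\le O(1)/\widetilde\sigma$; by $\tfrac{\widetilde\mu}{\widetilde\sigma}\int\widetilde\mu(\Delta_s u)^2$, which via (2.4) is $O(1)(1+\tfrac1{\widetilde\sigma}+\tfrac{\widetilde\mu}{\widetilde\sigma})$; by $\int f^2/\widetilde\sigma\le O(1)/\widetilde\sigma$; and by $\int\widetilde\sigma u_r^2\le O(1)$ using boundedness of the curvature factor and \lemref{est1}(i). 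Summing gives exactly the claimed right-hand side of (2.5).

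I expect the main obstacle to be the rigorous execution of the tangential testing: since $-\Delta_s u$ is not a priori admissible (it involves the second tangential derivatives that (2.4) is meant to establish), the argument must proceed through tangential difference quotients with bounds uniform in the step size, and the integration by parts across the interfaces $\Gamma_1,\Gamma_2$ must be justified from continuity of $u$ and the flux-matching in (1.5). The secondary difficulty is the careful bookkeeping of the curvature and commutator terms, making sure each carries the correct power of $\widetilde\sigma$ or $\widetilde\mu$ so that it is either absorbed or matched to the $O(1)$, $\widetilde\sigma/\widetilde\mu$, or $1/\widetilde\mu$ budget; the $C^3$ regularity of $\Gamma_1$ is exactly what keeps these coefficients bounded.
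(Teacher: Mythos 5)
Your proposal is correct and follows essentially the same route as the paper: multiply the equation in curvilinear coordinates by a localized $\Delta_s u$, integrate by parts in $s$ and $r$ using the transmission conditions to kill interface terms, absorb the commutator and curvature contributions (bounded thanks to $\Gamma_1\in C^3$) via Young's inequality into the $O(1)$, $\widetilde\sigma/\widetilde\mu$, and $1/\widetilde\mu$ budgets, and then obtain (2.5) pointwise from the equation itself combined with (2.4). The only addition beyond the paper's argument is your remark about realizing the test function through tangential difference quotients, which the paper leaves implicit.
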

     
\begin{proof}
Based on the above assumption of $d$, we take a smooth cut-off function $\eta$ satisfying
\[
\eta=\eta(r)=\begin{cases}
                  1, &  \text{if} \quad |r|\leq \frac{d}{2},  \\
                  0, &  \text{if} \quad |r| \geq \frac{3d}{4},
  \end{cases}
\]
with $0 \leq \eta \leq 1$ and $|\eta^{\prime}| \leq \frac{C}{d}$ for a positive constant $C$ independent of $\delta$. According to the curvilinear coordinates $(s,r)$  and the derivatives in (\ref{derivative2}) near $\Gamma_1$, we are led to
\begin{equation*}\label{eq24}
    u_t- \left(\frac{\widetilde{\sigma}}{\sqrt{|G|}}(\sqrt{|G|}u_r)_r+\widetilde{\mu}\Delta_{s}u\right)=f, \tag{2.6}
\end{equation*}
for $s \in \Gamma_1$ and $r \in (-d, d)\backslash \{0, \delta\}$.

For any $t_0>0$, using (\ref{curvilinear}),  we multiply both sides of (\ref{eq24}) by $\eta^2 \Delta_{s} u$ and integrate (\ref{eq24}) in both the $x$ and $t$ variables, leading to 
\begin{equation*}\label{eq25}
\begin{split}
     &\int_{t_0}^T\int_{\Gamma_1}\int_{-d}^{d}
     (u_t-f)\eta^2\Delta_{s} u F(s,r) drdsdt\\
  =&\int_{t_0}^T\int_{\Gamma_1}\int_{-d}^{d}\left(\frac{\widetilde{\sigma}}{\sqrt{|G|}}\left(\sqrt{|G|}u_r\right)_r+\widetilde{\mu}\Delta_{s}u\right)\eta^2 \Delta_{s} u F(s,r) drdsdt,
 \end{split} \tag{2.7}
\end{equation*}
where $F(s,r):=1+2H(s)r+\kappa(s)r^2$. Since $H(s)$ and $\kappa(s)$ are uniformly bounded as $\delta$ is sufficiently small, it follows from Young's inequality that
\begin{equation*}\label{eq26}
\begin{split}
     &\int_{t_0}^T\int_{\Gamma_1}\int_{-d}^{d}
     (u_t-f)\eta^2\Delta_{s} u F(s,r) drdsdt\\
  \leq & \frac{1}{2}\int_{t_0}^T\int_{\Gamma_1}\int_{-d}^{d} \widetilde{\mu} \eta^2 (\Delta_{s}u)^2 drdsdt + O(1) \int_{t_0}^T\int_{\Gamma_1}\int_{-d}^{d} 
  \frac{(u_t^2+f^2)}{\widetilde{\mu}}\eta^2 drdsdt.
 \end{split} \tag{2.8}
\end{equation*}

Reviewing the first term on the right-hand side of (\ref{eq25}), we get
\begin{equation*}\label{eqno27}
\begin{split}
    & \int_{t_0}^T\int_{\Gamma_1}\int_{-d}^{d}\frac{\widetilde{\sigma}}{\sqrt{|G|}}\left(\sqrt{|G|}u_r\right)_r\eta^2 \Delta_{s} u F(s,r) drdsdt\\
    =&  \int_{t_0}^T\int_{\Gamma_1}\int_{-d}^{d}\widetilde{\sigma}\left(u_{rr}-\left(\frac{1}{\sqrt{|G|}} \right )_r\sqrt{|G|}u_r\right)\eta^2 \Delta_{s} u F(s,r) drdsdt. 
\end{split} \tag{2.9}
\end{equation*}

Performing integration by parts in the $s$ variable and Young's inequality, we obtain
\begin{equation*}\label{eqno210}
\begin{split}
   &-\int_{t_0}^T\int_{\Gamma_1}\int_{-d}^{d}\widetilde{\sigma} \left(\frac{1}{\sqrt{|G|}} \right)_r\sqrt{|G|}u_r\eta^2\Delta_{ s }u F(s,r) drdsdt\\
  \geq &-\frac{1}{8}\int_{t_0}^T\int_{\Gamma_1}\int_{-d}^{d}\widetilde{\sigma}\eta^2 \left| \nabla_{s}(u_r)\right|^2  drdsdt - O(1) \int_{t_0}^T\int_{\Gamma_1}\int_{-d}^{d}\widetilde{\sigma}\eta^2 \left(u_r^2+ |\nabla_s u|^2 \right)drdsdt.
\end{split}\tag{2.10}
\end{equation*}

In addition to (\ref{eqno210}), since $\left(\nabla_s (u_r)\right)_{r} = \nabla_s(u_{rr}) + \underset{ij=1,2}{\sum}\left(g^{ij}(s,r)X_{s_i}\right)_r u_{rs_i}$, the integration by parts and Young's inequality also give rise to
\begin{equation*}\label{eqno211}
\begin{split}
   &\int_{t_0}^T\int_{\Gamma_1}\int_{-d}^{d}\widetilde{\sigma} u_{rr} \eta^2 \Delta_s u F(s,r) drdsdt\\
  =&-\int_{t_0}^T\int_{\Gamma_1}\int_{-d}^{d}\widetilde{\sigma} \eta^2 \nabla_s u \cdot \nabla_s \left( F(s,r) \right) u_{rr}drdsdt\\
  &+\int_{t_0}^T\int_{\Gamma_1}\int_{-d}^{d}\widetilde{\sigma} \left[\left(\eta^2 F(s,r)\right)_r \nabla_s u + \eta^2 F(s,r) (\nabla_s u)_r\right] \cdot \nabla_s (u_{r}) drdsdt\\
  & +\int_{t_0}^T\int_{\Gamma_1}\int_{-d}^{d}\widetilde{\sigma} \eta^2 \nabla_s u \cdot \left(\underset{ij=1,2}{\sum}\left(g^{ij}(s,r)X_{s_i}\right)_r u_{rs_i} \right)F(s,r) drdsdt\\
  := & I + II +III,
\end{split}\tag{2.11}
\end{equation*}
where we need to modify the assumption $\Gamma_1 \in C^2$ to $\Gamma_1 \in C^3$ because (\ref{eqno211}) involves the first derivative of $\kappa(s)$ and $H(s)$.

\smallskip

Subsequently, we consider the right-hand side of (\ref{eqno211}) separately due to its tediousness. By using integration by parts, some tedious manipulation yields
\begin{equation*}\label{R1}
\begin{split}
   I=&-\int_{t_0}^T\int_{\Gamma_1}\int_{-d}^{d}\widetilde{\sigma} \eta^2 \nabla_s u \cdot \nabla_s \left( F(s,r) \right) u_{rr}drdsdt\\
  = & -\int_{t_0}^T\int_{\Gamma_1}\int_{-d}^{d}\widetilde{\sigma} u_{r} \left\{\left[\eta^2 \nabla_s\left( F(s,r) \right)\right]_r \cdot \nabla_s u  + \eta^2 \nabla_s\left( F(s,r) \right) \cdot  (\nabla_s u)_r \right\} drdsdt\\
  \geq &-\frac{1}{8}\int_{t_0}^T\int_{\Gamma_1}\int_{-d}^{d}\widetilde{\sigma} \eta^2 \left|\nabla_s (u_r)\right|^2  drdsdt - O(1) \int_{t_0}^T\int_{\Gamma_1}\int_{-d}^{d}\widetilde{\sigma}\eta^2 \left(u_r^2+ |\nabla_s u|^2 \right)drdsdt,
\end{split}\tag{2.12}
\end{equation*}
where we have used the fact that $\left(\nabla_s u\right)_r = \nabla_s(u_r) + \underset{ij=1,2}{\sum}\left(g^{ij}(s,r)X_{s_i}\right)_r u_{s_i}$ and the transmission conditions (\ref{trans}). 

\smallskip

Thanks to the transmission conditions (\ref{trans}) and integration by parts again, we arrive at
\begin{equation*}\label{R2}
\begin{split}
  II +III 
  \geq  &  \frac{1}{2}\int_{t_0}^T\int_{\Gamma_1}\int_{-d}^{d}\widetilde{\sigma} \eta^2 \left|\nabla_s (u_r)\right|^2  drdsdt - O(1) \int_{t_0}^T\int_{\Gamma_1}\int_{-d}^{d}\widetilde{\sigma}\eta^2  |\nabla_s u|^2 drdsdt,
 \end{split} \tag{2.13}
\end{equation*}
where we made use of the positive definiteness of $G$. 
\smallskip

In view of (\ref{eq26})-(\ref{R2}), it follows from Lemma \ref{est1} that
\begin{equation*}\label{eq214}
\begin{split}
    &\int_{t_0}^T\int_{\Gamma_1}\int_{-d}^{d}\eta^2 \left(\widetilde{\mu}(\Delta_{s} u)^2 +\widetilde{\sigma} \left|\nabla_{s}(u_r)\right|^2\right) drdsdt
    \leq O(1)\left(1+\frac{1}{\widetilde{\mu}}+\frac{\widetilde{\sigma}}{\widetilde{\mu}} \right),
\end{split}\tag{2.14}
\end{equation*}
that is, the assertion (\ref{lemm2.21}) holds. 

\smallskip

Thus, we are left to handle the remaining term $u_{rr}$. Since 
\begin{equation*}
    \widetilde{\sigma}u_{rr}=u_t-f-\frac{\widetilde{\sigma}}{\sqrt{|G|}}(\sqrt{|G|})_ru_r-\widetilde{\mu}\Delta_s u
\end{equation*}
near $\Gamma_1$, it turns out that
\begin{equation*}\label{eq215}
\begin{split}
   \widetilde{\sigma}u_{rr}^2&\leq  O(1)(u_t^2+f^2)+O(1)\left(\widetilde{\sigma} u_r^2+\frac{\widetilde{\mu}^2}{\widetilde{\sigma}}(\Delta_s u)^2\right).\\ 
\end{split}\tag{2.15}
\end{equation*}
Combining this with (\ref{eq214}), we find that
\begin{equation*}\label{eq211}
\begin{split}
   \int_{t_0}^T\int_{\Gamma_1}\int_{-d}^{d}\widetilde{\sigma} u_{rr}^2 drdsdt
  \leq & O(1)\int_{t_0}^T\int_\Omega (u_t^2+f^2)dxdt+O(1)\int_{t_0}^T\int_{\Gamma_1}\int_{-d}^{d}\widetilde{\sigma} u_r^2+\widetilde{\mu}(\Delta_s u)^2  drdsdt\\   
  \leq & O(1) \left(1+\frac{1}{\widetilde{\sigma}}+\frac{\widetilde{\mu}}{\widetilde{\sigma}} \right).
\end{split}\tag{2.16}
\end{equation*}

This completes the proof of Lemma \ref{est2}.
\end{proof}

\medskip

Enlightened by \cite{CPW}, we finally conclude this section by addressing the regularity issue of the weak solution of (\ref{PDE}) for even general coefficients $A = A( x,t) =  \left(a_{ij}(x,t) \right)_{N \times N}$. Let (\ref{PDE}) be a strictly parabolic problem, in which sense, $a_{ij}(x,t)$ satisfies 
\begin{equation*}
    \underset{i,j}{\sum} a_{ij}(x,t) \xi_i \xi_j \geq \lambda_0 |\xi|^2,
\end{equation*}
for any $\xi \in \mathbb{R}^N$ and some constant $\lambda_0>0$. 

\smallskip

Instead of trying to be rigorous with details, the intention is to highlight the regularity results in the following theorem.

\begin{theorem}\label{thm2}
Let $k$ be an integer with $k \geq 2$ and $\alpha \in (0,1).$ Suppose that $\Gamma_1\in C^{k +\alpha}$  and  $f \in C^{k -2+\alpha,(k-2+\alpha)/2}(\overline{\Omega}_h\times[0,T])$, where $h = 1, \delta, 2$. 

\smallskip

If $a_{ij}\in C^{k-1+\alpha,(k-1+\alpha)/2}(\overline{\Omega}_h\times[0, T])$, then for any $t_0>0$, the weak solution $u$ of (\ref{PDE}) satisfies
\begin{equation*}
    u\in C^{k +\alpha, (k+\alpha)/2} \left(\overline{\mathcal{N}}_i\times [t_0, T] \right),
\end{equation*}
where $\mathcal{N}$ is a narrow neighborhood of $\partial\Omega_\delta (\Gamma_1 \cup \Gamma_2)$ and $\mathcal{N}_i =\mathcal{N}\cap \Omega_i$.
\end{theorem}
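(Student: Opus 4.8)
The plan is to prove \thmref{thm2} by localizing and separating two regimes: the region strictly inside each subdomain $\Omega_h$, where $A$ is smooth and the classical parabolic Schauder theory applies directly, and a neighborhood of the interfaces $\Gamma_1$ and $\Gamma_2$, where $A$ jumps and one must instead invoke the Schauder theory for parabolic transmission (diffraction) problems. Throughout, the restriction to $t \ge t_0 > 0$ is what permits starting from merely $u_0 \in L^2(\Omega)$: the a priori bounds of \lemref{est1} and \lemref{est2}, together with the smoothing effect of the parabolic flow (passing through parabolic $L^p$ estimates and Sobolev embedding), promote $u$ into a H\"older class on $[t_0,T]$ that serves as the base case, after which the conclusion follows by induction on $k$.

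First I would establish the interior part. Fix a subdomain and a parabolic subcylinder compactly contained in the interior of $\Omega_h \times (0,T]$; since $a_{ij} \in C^{k-1+\alpha,(k-1+\alpha)/2}$ and $f \in C^{k-2+\alpha,(k-2+\alpha)/2}$, the interior Schauder estimates for uniformly parabolic equations give $u \in C^{k+\alpha,(k+\alpha)/2}$ on the smaller cylinder, with the index matching $k$. These estimates do not see the interfaces and therefore handle everything outside an arbitrarily thin collar of $\Gamma_1 \cup \Gamma_2$.

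Next I would treat a point $p \in \Gamma_1$ (the argument at $\Gamma_2$ being identical). Using the curvilinear coordinates (\ref{curvilinear}), valid in the narrow neighborhood $\mathcal{N}$ for small $\delta$, I would straighten $\Gamma_1$ to the flat slab $\{r = 0\}$ by a $C^{k+\alpha}$ change of variables; because $\Gamma_1 \in C^{k+\alpha}$, this map and its inverse carry the relevant H\"older norms, the transformed principal coefficients lie in $C^{k-1+\alpha,(k-1+\alpha)/2}$, and the transmission conditions (\ref{trans}) become continuity of $u$ and continuity of the transformed conormal flux across $\{r=0\}$. On this flattened configuration, the local Schauder estimates for parabolic diffraction problems (as in \cite{CPW}, with the abstract framework of \cite{LM1972,W1987}) yield $u \in C^{k+\alpha,(k+\alpha)/2}$ up to $\{r=0\}$ from each side. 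Mapping back through the inverse chart and summing over a finite cover of $\Gamma_1$ via a partition of unity produces $u \in C^{k+\alpha,(k+\alpha)/2}(\overline{\mathcal{N}}_i \times [t_0,T])$ on each side.

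The induction on $k$ closes by the usual bootstrap: assuming $u \in C^{k-1+\alpha,(k-1+\alpha)/2}$ up to the interface, one differentiates the equation tangentially (the tangential directions preserve the transmission structure) to gain one order, then recovers the normal regularity from the equation itself, using $a_{ij} \in C^{k-1+\alpha}$ and $f \in C^{k-2+\alpha}$. The main obstacle is precisely the transmission across the interface: the standard Schauder estimates fail because $A$ is discontinuous there, so the crux is the diffraction estimate together with the bookkeeping it requires, namely tracking the single derivative lost in flattening a $C^{k+\alpha}$ surface, verifying that the conormal matching condition is compatible with tangential differentiation, and confirming that the a priori estimates \lemref{est1}--\lemref{est2} supply exactly the integrability needed to launch the induction for $t \ge t_0$.
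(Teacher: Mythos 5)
Your proposal follows essentially the same route as the paper: interior Schauder estimates away from the interfaces, and near $\Gamma_1\cup\Gamma_2$ the Schauder theory for parabolic transmission problems combined with Nirenberg's tangential-differentiation bootstrap (differentiate tangentially to preserve the transmission structure, then recover normal regularity from the equation), which is exactly the method of \cite{CPW} and \cite{LV} that the paper invokes. The paper itself only sketches this and defers all details to \cite{CPW}, so your write-up is, if anything, more explicit than the original.
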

\begin{proof}
In the interior of $\Omega_1$, $\Omega_\delta$, $\Omega_2$, and near $\Gamma_1 \cup \Gamma_2$, we have standard $L^p$ and Schauder regularity theories, whereas the regularity across the interface $\Gamma_1 \cup \Gamma_2$ is not straightforward.
 However, the theorem can be proved by the same method as employed in \cite{CPW}, which uses the idea of Nirenberg as recorded in \cite{LV} together with $L^p$ and Schauder theories for parabolic systems. So, we omit the details.
\end{proof}

\section{Auxiliary functions}\label{sec 3}
This section intends to construct an auxiliary function and estimate its behavior as the thickness of the thin layer is sufficiently small. Our idea of developing the auxiliary function is adapted from \cite{CPW} via a harmonic extension.  

\subsection{Estimates for the auxiliary function}
Let $\xi$ be a test function for (\ref{PDE}) satisfying $\xi \in C^\infty \left(\overline{\Omega} \times [ 0, T] \right)$ with $\xi(x, T) = 0$ and vanishing near $S_T$. Our next goal is to reconstruct a new test function $\overline{\xi}$ which differs from $\xi$ in $\Omega_\delta$. This procedure requires the support of the auxiliary function defined in the sequel.

\smallskip

For every $t\in [0, T]$, let $ \psi(s,r,t)$ be a bounded solution of 
\begin{equation*}\label{AF}
    \left\{
             \begin{array}{ll}
                 \sigma\psi_{rr}+ \mu \Delta_{\Gamma_1} \psi=0, & \Gamma_1\times(0, \delta),\\
                  \psi (s, 0, t)=g_1(s), & 
                  \psi(s, \delta, t)=g_2(s),
             \end{array}
  \right. \tag{3.1}
\end{equation*}
where $g_1(s) :=\xi(s, 0, t)$ and $g_2(s) :=\xi(s, \delta, t)$.

\smallskip

A little manipulation is needed to eliminate the coefficients $\sigma$ and $\mu$. Let $r = R\sqrt{\sigma/\mu}$, and suppressing the time dependence and substituting $r$ into (\ref{AF}), we have 
\begin{equation*}
    \Psi(s, R) = \psi(s, R\sqrt{\sigma/\mu}, t),
\end{equation*}
resulting in
\begin{equation*}\label{resca}
    \left\{
             \begin{array}{ll}
                   \Psi_{RR}+\Delta_{\Gamma_1} \Psi=0,  & \Gamma_1\times(0, h),\\
                  \Psi(s, 0)=g_1(s), &      \Psi(s, h)=g_2(s),
             \end{array}
  \right.\tag{3.2}
\end{equation*}
with
\begin{equation*}
    h:=\delta\sqrt{\frac{\mu}{\sigma}}=\frac{\mu\delta}{\sqrt{\sigma\mu}}=\frac{\sqrt{\sigma\mu}}{\sigma/\delta} . \tag{3.3}
\end{equation*}

As is known, for fixed $\delta>0$, the existence and uniqueness of the bounded solution $\Psi(s, R)$ follow from the standard elliptic theory. 

If $h \to H \in (0, \infty)$ as $\delta \to 0$, then we define the operator $ \mathcal{J}^H$  by
\begin{equation*}
    \mathcal{J}^H[g_1](s) :=\lim_{\delta\to 0}\Psi_R(s, 0),
\end{equation*}
where $ \mathcal{J}^H[g_1](s)$ is known as the Dirichlet-to-Neumann mapping that transforms a Dirichlet condition into a Neumann condition. The rigorous formula of that is also given but postponed in the coming subsection.

Since the rescaling relationship between $\psi$ and $\Psi$ becomes
\begin{equation*}\label{eqno34}
    \sigma     \psi_r(s,r,t)=\sqrt{\sigma\mu}      \Psi_R(s,R), \tag{3.4}
\end{equation*}
multiplying both sides of (\ref{AF}) by $u$ and using integration by part, afterward, we acquire
\begin{equation*}\label{eq35}
    \begin{split}
      -\int_{\Gamma_1}\int_0^\delta \left(\sigma  \psi_r u_r+\mu \nabla_{\Gamma_1}  \psi \cdot \nabla_{\Gamma_1}  u \right)ds dr
      =&-\sigma\int_{\Gamma_1} \left( \psi_r(s, \delta, t) u(s, \delta, t) - \psi_r(s, 0, t) u(s, 0, t) \right)ds\\
      =&-\sqrt{\sigma\mu}\int_{\Gamma_1} \left(\Psi_R(s,h)u(s,\delta,t) -\Psi_R(s,0)u(s,0,t)\right) ds.
      \end{split}\tag{3.5}
\end{equation*}
Applying the same performance above on (\ref{AF}) by changing $u$ to $\psi$, we get
\begin{equation*}\label{eqno36}
    \begin{split}
      -\int_{\Gamma_1}\int_0^\delta \left(\sigma  \psi_r^2 +\mu |\nabla_{\Gamma_1}  \psi |^2  \right)ds dr
      =&-\sigma\int_{\Gamma_1} \left( \psi_r(s, \delta, t) g_2(s) - \psi_r(s, 0, t) g_1(s) \right)ds\\
      =&-\sqrt{\sigma\mu}\int_{\Gamma_1} \left(\Psi_R(s, h)\xi(s, \delta, t) - \Psi_R(s, 0) \xi(s, 0, t) \right) ds.
      \end{split}\tag{3.6}
\end{equation*}

\medskip

We are now in a position to estimate $\Psi_R(s, 0)$ and $\Psi_R(s, h)$. However, because $h$ relies on the value of $\delta$, we need to handle the problem separately. 

If $h$ is sufficiently small with $h\to 0$ as $\delta\to 0$, then it follows from the formula of $\Psi_R(s, R)$ in the next subsection that
\begin{equation*}\label{eq37}
\begin{split}
    \left| \Psi_R(s,0)-\frac{g_2(s)-g_1(s)}{h} \right| & \leq O(h).
\end{split}\tag{3.7}
\end{equation*}
Hence, we have
\begin{equation*}\label{size1}
  \begin{split}
      \Psi_R(s,0)&= \frac{g_2(s)-g_1(s)}{h} + O(h),
  \end{split}\tag{3.8}
\end{equation*}
and also, 
\begin{equation*}\label{size11}
  \begin{split}
      \Psi_R(s,h)= \frac{g_2(s)-g_1(s)}{h} + O(h).
  \end{split}\tag{3.9}
\end{equation*}

Likewise, by the formulas of $\Psi(s, 0)$ and $\Psi(s, R)$ given in the future, we obtain
\begin{equation*}\label{eqno310}
   \begin{split}
      \left|\Psi_R(s,h) - \Psi_R(s,0) + \frac{h}{2} \Delta_{\Gamma_1} g_1(s) + \frac{h}{2} \Delta_{\Gamma_1} g_2(s) \right|  \leq O(h^2).
   \end{split}\tag{3.10}
\end{equation*}
Subsequently, (\ref{eqno310}) gives
\begin{equation*}\label{size2}
     \Psi_R(s, h)-\Psi_R(s,0)= -\frac{h}{2} \Delta_{\Gamma_1} g_1(s) - \frac{h}{2} \Delta_{\Gamma_1} g_2(s)  + O(h^2).\tag{3.11}
\end{equation*}

\smallskip

On the other hand, if $h \to H \in (0, \infty]$ as $\delta\to 0$, from the Taylor's expansion for $\Psi(s,R)$, then we obtain
\begin{equation*}
      \Psi_R(s,0)=\frac{  \Psi(s,R)-  \Psi(s,0)}{R}-\frac{R}{2}  \Psi_{RR}(s,\overline{R}),
\end{equation*}
for some $\overline{R}\in [0, R]$. Taking $R = min \{h,1\}$, we get
\begin{equation*}
\begin{split}
    \|  \Psi_R(s,0)\|_{L^\infty(\Gamma_1)}&\leq \frac{O(1)}{R},
\quad
   \|  \Psi_R(s,h)\|_{L^\infty(\Gamma_1)}\leq \frac{O(1)}{R}.
\end{split}
\end{equation*}
Consequently, (\ref{eqno34}) yields
\begin{equation*}\label{size3}
    \|\sigma  \psi_r(s,0,t)\|_{L^\infty(\Gamma_1)}=\sqrt{\sigma\mu} \|  \Psi_R(s,0)\|_{L^\infty(\Gamma_1)} = O(1)\sqrt{\sigma\mu}. \tag{3.12}
\end{equation*}

\subsection{The formula for the auxiliary function}
The task of this subsection is to find rigorous formulas for $\Psi_R(s, 0)$ and  $\Psi_R(s, R)$. The only method used is the separation of variables, by which means we derive
\begin{equation*}
    \Psi(s, R) = \sum_{n=0}^\infty \left(A_n e^{\sqrt{\lambda_n} R} + B_n e^{-\sqrt{\lambda_n} R} \right) e_n(s), 
\end{equation*}
where $A_n$ and $B_n$ are two coefficients independent of $R$; $\lambda_n$ and $e_n(s)$ are  the  eigenvalues and the corresponding eigenfunctions of the Laplacian-Beltrami operator $-\Delta_{\Gamma_1}$ defined on $\Gamma_1$.

\smallskip

According to the boundary conditions in (\ref{resca}), implementing simple calculation gives rise to 
\begin{equation*}\label{eq313}
    A_n = \frac{g_{2n}-g_{1n}e^{-\sqrt{\lambda_n} h}}{2 sinh(\sqrt{\lambda_n} h)}, \quad B_n = \frac{-g_{2n} +g_{1n}e^{\sqrt{\lambda_n} h}}{2 sinh(\sqrt{\lambda_n} h)}, \tag{3.13}
\end{equation*}
where
\begin{equation*}
    g_{1n} = < e_n, g_1 > := \int_{\Gamma_1} e_n(s) g_1(s) ds, \quad g_{2n} = <e_n, g_2> := \int_{\Gamma_1} e_n(s) g_2(s) ds.
\end{equation*}
Thus, we infer that 
\begin{equation*}\label{eq314}
\begin{split}
      &\Psi_R(s,0)=\sum_{n=1}^\infty \sqrt{\lambda_n} e_n(s) \frac{2g_{2n}-g_{1n}(e^{-\sqrt{\lambda_n} h}+e^{\sqrt{\lambda_n} h})}{2 sinh(\sqrt{\lambda_n} h)},\\
    & \Psi_R(s,h)= \sum_{n=1}^\infty \sqrt{\lambda_n} e_n(s) \frac{-2g_{1n}+g_{2n}(e^{-\sqrt{\lambda_n} h}+e^{\sqrt{\lambda_n} h})}{2 sinh(\sqrt{\lambda_n} h)}.
\end{split}\tag{3.14}
\end{equation*}

Before approaching deeper into the formula of $\mathcal{J}^H[g_1](s)$, we first focus on $g_2(s)$ since it depends on how we choose the test function $\xi$. 

Consider $\xi$ in the case that is independent of $\delta$ and $\underset{\delta \to 0}{\lim} g_2(s) = g_1(s)$. If $h\to H\in (0, \infty)$ as $\delta \to 0$, then it follows from (\ref{eq314}) that
\begin{equation*}\label{d2n}
\begin{split}
     \mathcal{J}^H[g_1](s) = \underset{\delta \to 0}{\lim}\Psi_R(s, 0)&=\sum_{n=1}^\infty \sqrt{\lambda_n} e_n(s) \frac{2g_{1n}-g_{1n}(e^{-\sqrt{\lambda_n} H}+e^{\sqrt{\lambda_n} H})}{2 sinh(\sqrt{\lambda_n} H)}.
\end{split}\tag{3.15}
\end{equation*}

Contrasted with the case mentioned above, if the test function $\xi$ remains to vanish in $\overline{\Omega}_2$, say, $g_2(s)\equiv 0$, we then define 
\begin{equation*}\label{d2n2}
\begin{split}
   \mathcal{J}_1^H[g_1](s) = \underset{\delta \to 0}{\lim} \Psi_R(s,0) &= \sum_{n=1}^\infty \sqrt{\lambda_n} e_n(s) \frac{-g_{1n}(e^{-\sqrt{\lambda_n} H}+e^{\sqrt{\lambda_n} H})}{2 sinh(\sqrt{\lambda_n} H)},\\
   \mathcal{J}_2^H[g_1](s) = \underset{\delta \to 0}{\lim} \Psi_R(s,H)&=\sum_{n=1}^\infty \sqrt{\lambda_n} e_n(s) \frac{-2g_{1n}}{2 sinh(\sqrt{\lambda_n} H)}.
\end{split}\tag{3.16}
\end{equation*}
Also, note that 
$$
\mathcal{J}^H[g_1](s) = \mathcal{J}_1^H[g_1](s) - \mathcal{J}_2^H[g_1](s).
$$
Furthermore, since 
\begin{equation*}
\begin{split}
     &\left|\mathcal{J}^{H_1}[g_1](s) - \mathcal{J}^{H_2}[g_1](s) \right|
     \leq C \left|H_1-H_2\right|\sum_{n=1}^\infty 2\lambda_n e_n(s) g_{1n}\frac{2 -e^{-\sqrt{\lambda_n} h^\prime}-e^{\sqrt{\lambda_n} h^\prime}}{ (e^{\sqrt{\lambda_n} h^\prime}-e^{-\sqrt{\lambda_n} h^\prime})^2},
\end{split}
\end{equation*}
for some $h^\prime \in (H_1, H_2)$, if $H_2 \to \infty$, then $h^\prime \to \infty$, which implies that
 $\mathcal{J}^H$ converges uniformly in $H$, and so do $\mathcal{J}_1^H$ and $\mathcal{J}_2^H$. 
 
 Moreover, we define 
 $$\mathcal{J}^\infty[g_1](s) = \underset{	H \to \infty}{\lim}\mathcal{J}^H[g_1](s), \quad \mathcal{J}_2^\infty[g_1](s) = \underset{ H \to \infty}{\lim}\mathcal{J}_2^H[g_1](s).
 $$
 Using (\ref{d2n2}) again, it is easy to check that $\mathcal{J}_2^\infty[g_1](s) = 0$ and
\begin{equation*}
     \begin{split}
       \mathcal{J}_1^\infty[g_1](s) = \mathcal{J}^\infty[g_1](s) := - (-\Delta_{\Gamma_1} )^{1/2}g_1(s),
     \end{split}
\end{equation*}
where $(-\Delta_{\Gamma_1} )^{1/2}g_1(s)$ is the fractional Laplacian on a smooth function $g_1$ defined on $\Gamma_1$.

\medskip

Finally, we end this section by pointing out that these Dirichlet-to-Neumann operators are linear and symmetric. If $g_1(s)$ is smooth, for $H \in (0, \infty]$, we then define a functional on a function $w \in H^{1/2}(\Gamma_1)$ by 
\begin{equation*}\label{eq317}
    \begin{split}
        F(w) = \int_{\Gamma_1} \mathcal{J}^H[g_1](s) w(s) ds =:  <\mathcal{J}^H[g_1], w>.
    \end{split}\tag{3.17}
\end{equation*}
Thus we extend this functional for general $g_1(s) \in H^{1/2}(\Gamma_1)$.

\smallskip

The proof of the linearity is trivial that we omit it. Because of (\ref{d2n2}), we can show the symmetry upon a direct computation 
\begin{equation*}\label{eq318}
    \begin{split}
        <\mathcal{J}^H[g_1], w> = \int_{\Gamma_1} \mathcal{J}^H[g_1](s) w(s) ds = \int_{\Gamma_1}  g_1(s)\mathcal{J}^H[w](s) ds = <g_1, \mathcal{J}^H[w]>.
    \end{split}\tag{3.18}
\end{equation*}

For $\mathcal{J}_1^H[g_1]$ and $\mathcal{J}_2^H[g_1]$,
the same procedure as above can be used to show that   they are also linear and symmetric.
\section{Proof of Theorem \ref{thm:bigthm} }\label{sec 4}
The main result of this section is to address effective boundary conditions on $\Gamma_1 \times (0, T)$.
\begin{proof}
The proof of Theorem \ref{thm:bigthm} consists of two significant steps. First, we establish a compactness argument on the weak solution of (\ref{PDE}) to show the strong convergence $u \to v$ after passing to a subsequence of $\delta \to 0$. Next, we find effective boundary conditions arising from Table \ref{tb1} with the trick of an auxiliary function $\psi(s, r, t)$ generated by a harmonic extension.

\medskip

\noindent\textbf{ Step 1}. To begin with the proof,  we first focus on the compactness of $\{ u \}_{\delta > 0}.$

For a sufficiently small $\delta$, we choose a small $d> \delta$ and consider the domain $\Omega_d =\{x \in \Omega_2 |  dist(x, \Gamma_1) >d \} $. From Lemma \ref{est1}, it is apparent that for a given small $t_0 \in (0,T]$, $\{u\}_{\delta >0}$ is bounded in $W^{1, 0}_2(\Omega_1 \times (0, T))$, $W^{1, 0}_2(\Omega_d \times (0, T))$, $W^{1, 1}_2(\Omega_1 \times (t_0, T))$ and $W^{1, 1}_2(\Omega_d \times (t_0, T))$. Thus, after passing to a subsequence of $\delta \to 0$,  $u \to v $ weakly in the above spaces; moreover, $v$ belongs to all these spaces.

Alternatively, we replace $d$ by $\{d_k\}_{k\geq 0}$ such that $d_0 = d$ and $d_k\to 0$ as $k \to \infty$. By a diagonal argument, after further passing to a subsequence of $\delta \to 0$, $u \to v$ weakly in $W^{1,1 }_2(\Omega_{d_k} \times (t_0, T))$ for all $k$. This implies that $v \in W^{1, 0}_2(Q_T^1)$, $W^{1,0}_2(Q^2 _T)$,  $W^{1, 1}_2(\Omega_1 \times (t_0, T))$ and $W^{1, 1}_2\left((\Omega\backslash\overline{\Omega}_1) \times (t_0, T) \right)$.

Furthermore, using Lemma \ref{est1}, we deduce that $\{u\}_{\delta>0}$ is also bounded in $C([t_0, T];  H^1(\Omega_1))$ and $C([t_0, T];  H^1(\Omega_{d_k}))$. According to the Banach-Eberlein theorem, $ u \to v$ weakly in $C([t_0, T]; H^1(\Omega_1))$ and $C([t_0, T]; H^1(\Omega_{d_k}))$ after passing to a subsequence of $\delta \to 0.$ Together with the compactness of the embedding $H^1(\Omega)\hookrightarrow L^2(\Omega)$, for any fixed $t$, $\{u\}_{\delta>0}$ is precompact in $L^2(\Omega_1)$ and $L^2(\Omega_{d_k})$ for all $k$.

Note that the functions $\{u\}_{\delta>0}$: $t\in[t_0, T]\mapsto u(\cdot, t)\in L^2(\Omega)$ are equicontinuous for the boundedness of the term $\int_{Q_T}t u_t^2dxdt$ in Lemma \ref{est1}. Consequently, the generalized Arzela-Ascoli theorem suggests that after passing to a further subsequence of $\delta \to 0$, $u \to v$ strongly in $C\left([t_0, T]; L^2(\Omega_1)\right)$ and $C\left([t_0, T]; L^2(\Omega_{d_k})\right)$. 

Therefore, by sending $k \to \infty$ afterward, we verify that  $u \to v$ strongly in $C\left([t_0,T];L^2(\Omega)\right)$, resulting from
\begin{equation*}
\begin{split}
    \int_{\Omega}(u-v)^2 dx &\leq \int_{\Omega_1}  (u-v)^2 dx + \int_{\Omega_{d_k}}  (u-v)^2 dx +\sqrt{d_k},
\end{split}
\end{equation*}
where Lemma \ref{est1} was used.

\smallskip

For the completion of the compactness argument, it now remains to prove that $u \to v$ strongly in $C\left([0, T]; L^2(\Omega) \right)$. To this end, we construct a sequence $u_0^n \in C_0^\infty(\Omega)$ in the following way: multiply $u_0$ by cut-off functions in the $r$ variable with $u_0^n$ vanishing in $\Omega_\delta$, satisfying $\|\nabla u_0^n\|_{L^2(\Omega)} \leq C(n)$ independent of $\delta$, and $\|u_0-u_0^n\|_{L^2(\Omega)}\leq \frac{1}{n}+\|u_0\|_{L^2(\Omega_\delta)}$. 

Then, we decompose $u=u_1+u_2$, where $u_1$ is the unique weak solution of (\ref{PDE}) with $f=0$ and the initial value changed by $u_0-u_0^n$, and $u_2$ is the unique weak solution of (\ref{PDE}) with the initial value changed by $u_0^n$. Performing energy estimates on the new PDE concerning $u_1$ results in
\begin{equation*}
\|u_1(\cdot, t)\|_{L^2(\Omega)} \leq \|u_0-u_0^n\|_{L^2(\Omega)} \leq \frac{1}{n}+\|u_0\|_{L^2(\Omega_\delta)}.
\end{equation*}

For any small $t\in [0, t_0]$, multiplying the PDE for $u_2$ by $(u_2)_t $ and performing integration by parts in both the $x$ and $t$ variables over $Q^{t}=\Omega\times(0, t)$, we then have
\begin{equation*}
\begin{split}
 \int_{Q^{t}}(u_2)^2_t dxdt+\int_\Omega \nabla u_2 \cdot A\nabla u_2 dx 
\leq & \int_{Q^{t}}f^2 dxdt+k_1\int_{\Omega_1} |\nabla u_0^n |^2dx+k_2 \int_{\Omega_2} |\nabla u_0^n |^2dx \leq C(n^2).
\end{split}
\end{equation*}

As a result, 
\begin{equation*}
\begin{aligned}
  &&  \|u_2(\cdot,t)- u_0^n (\cdot)\|^2_{L^2(\Omega)} &= 2\int_0^{t}\int_\Omega (u_2(x,t)-u_0^n(x)) (u_2)_tdxdt \\
  &&\ &\leq 2\left(\int_0^{t}\int_\Omega |u_2(x,t)-u_0^n(x)|^2 \right)^\frac{1}{2} \left(\int_0^{t}\int_\Omega (u_2)_t^2 \right)^\frac{1}{2}dxdt\\
  &&\ &\leq 2\sqrt{t} \max_{t\in[0,t]} \|u_2(\cdot,t)-u_0^n(\cdot)\| C(n).
\end{aligned}
\end{equation*}

Finally, combining the above estimates, we get
\begin{equation*}
\begin{split}
   \|u(\cdot, t)- u_0(\cdot)\|_{L^2(\Omega)} &\leq \|u_1(\cdot, t)\|_{L^2(\Omega)} +\|u_2(\cdot, t)- u_0(\cdot)\|_{L^2(\Omega)}+\|u_0- u_0(\cdot)\|_{L^2(\Omega)}\\
  &\leq \frac{2}{n}+2\|u_0\|_{L^2(\Omega_\delta)}+2\sqrt{t_0}C(n),
\end{split}
\end{equation*}
for any $n$ and small $\delta$. By sending $\delta \to 0, t_0 \to 0$, and $n \to \infty$, we get $v(\cdot, t)\to u_0$ as $t\to 0$. The compact argument follows immediately from what we have proved.

\medskip

\noindent\textbf{ Step 2}. In what follows, our goal is to obtain the weak solution of (\ref{EPDE}) with possible effective boundary conditions.

By the preceding compactness argument, $u \to v$ strongly in $C\left([0, T]; L^2(\Omega)\right)$ after passing to a subsequence of $\delta >0$. If $v$ is the weak solution of (\ref{EPDE}) with any boundary condition listed in Table \ref{tb1}, then the existence and uniqueness ensure the convergence without passing a subsequence. The assertion of this theorem now  readily follows.

At the end of the first step, we have already taken care of the initial condition. For simplicity, we take a test function $ \xi \in C^\infty(\overline\Omega\times[0, T])$ with $ \xi=0$ on $\partial\Omega\times(0, T)$ and $ \xi=0$ for $t\in[0,\varepsilon]\cup[T-\varepsilon,T]$ for some small $\varepsilon>0.$ Next, construct a new test function $\overline{\xi}(x,t) $ in the domain $\overline{\Omega}\times [0,T]$ by
\begin{equation*}
   \overline{\xi}(x,t)=\left\{
\begin{array}{ll}
     \xi(x,t), & x\in \overline{\Omega}\backslash \Omega_\delta, \\
        \psi(s,r,t), & x\in\Omega_\delta,
\end{array}
\right.
\end{equation*}
where $ \psi(r,s,t)$ is the solution to the elliptic problem (\ref{AF}), and it is easy to see that $\overline{\xi}\in W^{1,1}_{2,0}(Q_T)$.

According to the definition of the weak solution of (\ref{PDE}), it follows that
\begin{equation*}\label{WKSol}
    \begin{split}
      \mathcal{A}[u,\overline{\xi}]&=k_1\int_{0}^{T}\int_{\Omega_1}\nabla u\cdot \nabla  \xi dxdt+k_2\int_{0}^{T}\int_{\Omega_{d_k}}\nabla u\cdot \nabla  \xi dxdt \\
      & \quad + \int_{0}^{T}\int_{(\Omega\backslash\overline{\Omega}_1)\backslash\overline{\Omega}_{d_k}}\nabla u\cdot A\nabla  \xi dxdt -\int_{Q_T}(u_0 \xi(x,0) +f \xi +u\xi_t )dxdt, \\
    \end{split} \tag{4.1}
\end{equation*}
for the given $d_k > \delta$. Since $u \to v $ weakly in $W^{1, 0}_2(\Omega_1 \times (0, T))$ and $W^{1, 0}_2(\Omega_{d_k} \times (0, T))$, and strongly in $C\left([0, T]; L^2(\Omega)\right)$ as $\delta \to 0$, we summarize
\begin{equation*}\label{convergence}
\left\{
\begin{array}{ll}
     &  \int_{Q_T}u\xi_t dxdt \to \int_{Q_T}v\xi_t dxdt, \\
     & \int_{0}^{T}\int_{\Omega_1}\nabla u\cdot \nabla  \xi dxdt \to \int_{0}^{T}\int_{\Omega_1}\nabla v\cdot \nabla  \xi dxdt,\\
     &\int_{0}^{T}\int_{\Omega_{d_k}}\nabla u\cdot \nabla  \xi dxdt \to \int_{0}^{T}\int_{\Omega_{d_k}}\nabla v\cdot \nabla  \xi dxdt,
\end{array}
\right. \tag{4.2}
\end{equation*}
and 
\begin{equation*}\label{convergence2}
  \begin{split}
    \int_{0}^{T}\int_{(\Omega\backslash\overline{\Omega}_1)\backslash\overline{\Omega}_{d_k}}\nabla u\cdot A\nabla  \xi dxdt=&\int_{0}^{T}\int_{\Omega_{\delta}}\nabla u\cdot A\nabla  \xi dxdt +k_2\int_{0}^{T}\int_{\{x\in \Omega _2 | dist(x, \Gamma_1) <d_k\}}\nabla u\cdot \nabla  \xi dxdt\\
    =&\int_{0}^{T}\int_{\Omega_{\delta}}\nabla u\cdot A\nabla  \xi dxdt +O(\sqrt{d_k})\\
    \to & \lim_{\delta\to 0} \int_{0}^{T}\int_{\Omega_{\delta}}\nabla u\cdot A\nabla  \xi dxdt,
  \end{split}\tag{4.3}
\end{equation*}
in which the convergence results from
\begin{equation*}
  \begin{split}
    &\left|\int_{0}^{T}\int_{\{x\in \Omega _2 | dist(x, \Gamma_1) <d_k\}}\nabla u\cdot \nabla  \xi dxdt \right|\\
\leq &  \left(\int_{0}^{T}\int_{\{x\in \Omega _2 | dist(x, \Gamma_1) <d_k\}} |\nabla u|^2 dxdt \right)^{1/2}\left(\int_{0}^{T}\int_{\{x\in \Omega _2 | dist(x, \Gamma_1) <d_k\}} |\nabla \xi|^2 dxdt \right)^{1/2}\\
    \leq &\left(\int_{0}^{T}\int_{\Omega_2} |\nabla u|^2 dxdt \right)^{1/2}\left(\int_{0}^{T}\int_{ \Omega _2 } |\nabla \xi|^2 dxdt \right)^{1/2}\\
    \leq & O(\sqrt{d_k}).
  \end{split}
\end{equation*}

In addition, by sending $\delta \to 0$ first and then $k\to \infty$, because of (\ref{convergence}) and (\ref{convergence2}), (\ref{WKSol}) gives
\begin{equation*}\label{wksol}
\begin{split}
   \mathcal{L}[v,\xi]=&\int_{0}^{T}\int_{\Omega}\nabla \xi \cdot A_0(x)\nabla v dxdt + \int_{Q_T} \left(v_t \xi - f \xi \right)dxdt = -\lim_{\delta\to 0}\int_{0}^{T}\int_{\Omega_\delta}\nabla \psi\cdot A\nabla  u dxdt.
 \end{split}\tag{4.4}
\end{equation*}

From now on, our problem reduces to investigate the asymptotic behavior of the right-hand side term of (\ref{wksol}). 

Using the curvilinear coordinates $(s,r)$ in (\ref{curvilinear}), in terms of (\ref{derivative}) and (\ref{derivative2}), we thus have
\begin{equation*}\label{RHS}
    \begin{split}
    -\int_{0}^{T}\int_{\Omega_\delta}\nabla \psi\cdot A\nabla  u dxdt =&-\int_0^T\int_{\Gamma_1}\int_0^\delta (\sigma     \psi_r u_r+\mu\nabla_s     \psi \nabla_s u) [1+2H(s)r+\kappa(s)r^2] drdsdt \\
    =&-\int_0^T\int_{\Gamma_1}\int_0^\delta (\sigma     \psi_r u_r+\mu\nabla_{\Gamma_1} \psi \nabla_{\Gamma_1} u)drdsdt \\
     &-\int_0^T\int_{\Gamma_1}\int_0^\delta (\sigma     \psi_r u_r+\mu\nabla_{\Gamma_1} \psi \nabla_{\Gamma_1} u)[2H(s)r+\kappa(s)r^2] drdsdt \\
    & -\int_0^T\int_{\Gamma_1}\int_0^\delta \mu (\nabla_s     \psi \nabla_s u-\nabla_{\Gamma_1} \psi \nabla_{\Gamma_1} u)[1+2H(s)r+\kappa(s)r^2]drdsdt\\
    =:&I+II+III.
     \end{split} \tag{4.5}
\end{equation*}
Thanks to Lemma \ref{est1} and (\ref{eqno34}), it follows from H\"older inequality that
\begin{equation*}\label{sizeII}
\begin{split}
    |II|\leq &\int_0^T \left|\int_{\Gamma_1}\int_0^\delta -(\sigma     \psi_r u_r+\mu\nabla_{\Gamma_1} \psi \nabla_{\Gamma_1} u)[2H(s)r+\kappa(s)r^2] dsdr \right|dt\\
    \leq & O(\delta) \int_0^T \left( \int_{\Gamma_1}\int_0^\delta \sigma \psi_r^2 +\mu(\nabla_{\Gamma_1} \psi )^2 \right)^{1/2}\left( \int_{\Omega} \sigma u_r^2+\mu (\nabla_{\Gamma_1} u)^2 \right)^{1/2}dt\\
    \leq & O(\delta) \int_0^T \frac{1}{\sqrt{t}}\left(\int_{\Gamma_1} \sigma [\psi_r(s,\delta,t)\psi(s,\delta,t) - \psi_r(s,0,t)\psi(s,0,t)]ds \right)^{1/2}dt\\
    \leq & O(\delta) \int_0^T \frac{1}{\sqrt{t}}\left(\int_{\Gamma_1} \sqrt{\sigma\mu} [\Psi_R(s,\delta)\psi(s,\delta,t) - \Psi_R(s,0)\psi(s,0,t)]ds \right)^{1/2}dt.
\end{split}\tag{4.6}
\end{equation*}
Subsequently, applying Lemma \ref{est1} and (\ref{eqno34}) again, we then obtain
\begin{equation*}\label{sizeIII}
\begin{split}
    |III|\leq & \left|\int_0^T\int_{\Gamma_1}\int_0^\delta \mu (\nabla_s     \psi \nabla_s u-\nabla_{\Gamma_1} \psi \nabla_{\Gamma_1} u)[1+2H(s)r+\kappa(s)r^2]drdsdt \right|\\
    \leq & O(\delta) \left|\int_0^T\int_{\Gamma_1}\int_0^\delta \mu  \sum_{ij}g^{ij}_r(s,\overline{r}) \psi_{s_i} u_{s_j}drdsdt \right|\\
    \leq & O(\delta) \int_0^T\int_{\Gamma_1}\int_0^\delta \mu |\sum_{ij} \psi_{s_i} u_{s_j}|drdsdt\\
    \leq & O(\delta) \int_0^T \left( \int_{\Gamma_1}\int_0^\delta \sigma \psi_r^2 +\mu(\nabla_{\Gamma_1} \psi )^2 \right)^{1/2}\left( \int_{\Omega} \sigma u_r^2+\mu (\nabla_{\Gamma_1} u)^2 \right)^{1/2}dt\\
    \leq & O(\delta) \int_0^T \frac{1}{\sqrt{t}}\left(\int_{\Gamma_1} \sigma [\psi_r(s,\delta,t)\psi(s,\delta,t) - \psi_r(s,0,t)\psi(s,0,t)]ds\right)^{1/2} dt\\
    \leq & O(\delta) \int_0^T \frac{1}{\sqrt{t}}\left(\int_{\Gamma_1} \sqrt{\sigma\mu} [\Psi_R(s,\delta)\psi(s,\delta,t) - \Psi_R(s,0)\psi(s,0,t)]ds \right)^{1/2}dt,
\end{split}\tag{4.7}
\end{equation*}
since $g^{ij}(s,r) = g^{ij}(s,0) + r g_r^{ij}(s,r')$ for $r' \in (0, r)$.

In the light of (\ref{size3}), (\ref{sizeII}) and  (\ref{sizeIII}), we are led to
\begin{equation*}\label{s1}
   |II +III| \leq O(T\sqrt{\sigma\mu})^{1/2}\delta, \tag{4.8} 
\end{equation*}
in the case that $ h \to H \in (0,\infty]$ as $\delta \to 0$. For the case that $h \to 0$ as $\delta \to 0$, it is much more complicated. Rather than give a conclusion, we will get into details in the following. 

For ease of use, by recalling (\ref{eq35}), we simplify the expression as 
\begin{equation*}\label{I}
\begin{split}
    I = &-\int_0^T\int_{\Gamma_1}\int_0^\delta (\sigma     \psi_r u_r+\mu\nabla_{\Gamma_1} \psi \nabla_{\Gamma_1} u)drdsdt \\
    =&- \sqrt{\sigma\mu}\int_0^T\int_{\Gamma_1} [\Psi_R(s,h)u(s,\delta,t) -\Psi_R(s,0)u(s,0,t)]dsdt\\
    := & \int_0^T \mathcal{I}dt.
\end{split}\tag{4.9}
\end{equation*}
Let us consider the following cases as $\delta \to 0$
\begin{equation*}
\begin{split}
    &(1) \frac{\sigma}{\delta}\to 0, \quad
    (2) \frac{\sigma}{\delta}\to b\in (0,\infty), \quad (3) \frac{\sigma}{\delta}\to \infty \text{ and } \sigma\delta^3 \to 0,
\end{split}
\end{equation*}
with three subcases $(i) \sigma\mu\to 0$, $(ii)\sqrt{\sigma\mu}\to \gamma\in(0,\infty)$, $(iii) \sigma\mu \to \infty$.

\medskip

Now, we start with the first case that implies $\sigma\to 0$ as $\delta \to 0$.

\smallskip

\noindent Case 1. $\frac{\sigma}{\delta}\to 0$ as $\delta \to 0$.

\noindent Subcase $(1i). \quad  \sigma\mu\to 0$. In this case, combining Lemma \ref{est1} and (\ref{AF}), it is not difficult to verify that
\begin{equation*}\label{eq410}
\begin{split}
    &\left|\int_{0}^{T}\int_{\Omega_\delta}\nabla \xi\cdot A(x)\nabla  u dxdt\right| \\
\leq &O(1) \int_0^T \left( \int_{\Gamma_1}\int_0^\delta \sigma \psi_r^2 +\mu |\nabla_{\Gamma_1} \psi |^2 drds \right)^{1/2}\left( \int_{\Omega} \sigma u_r^2+\mu |\nabla_{\Gamma_1} u|^2 drds\right)^{1/2}dt\\
    \leq & max \{O((\sigma\mu)^{1/4}), O(\sigma)\} \\  \to & 0 \text{ as } \delta\to 0,
\end{split}\tag{4.10}
\end{equation*}
with the assistance of (\ref{eqno36}), (\ref{size1}), (\ref{size11}), and (\ref{size3}). Consequently, as $\delta \to 0$, (\ref{wksol}) is equal to
\begin{equation*}
    \mathcal{L}[v,\xi] = 0.
\end{equation*}
We thus infer that $v$ satisfies the boundary condition on $\Gamma_1 \times (0, T)$ 
\begin{equation*}
    k_1\frac{\partial v_1}{\partial\textbf{n}} =  k_2\frac{\partial v_2}{\partial\textbf{n}}.
\end{equation*}

Moreover,  we intend to determine other effective boundary conditions of $v_1$ on $\Gamma_1$.
Take the test function $\xi$ that depends on $\delta$, with $\xi \equiv 0$ in $\overline{\Omega}_2$, and $\psi$ is defined similarly in (\ref{AF}) only by changing $g_2(s) \equiv 0$. Since $u \to v $ weakly in $W^{1, 0}_2(\Omega_1 \times (0, T))$, as $\delta \to 0$, it follows from (\ref{wksol}) that \begin{equation*}\label{eqv_1}
\begin{split}
   \mathcal{L}[v_1,\xi]=&\int_{0}^{T} \int_{\Omega_1} (v_1)_t \xi dxdt + \int_{0}^{T}\int_{\Omega_1}\nabla v_1 \cdot A_0\nabla  \xi dxdt-\int_{0}^{T} \int_{\Omega_1}f \xi dxdt\\
   =& -\lim_{\delta\to 0}\int_{0}^{T}\int_{\Omega_\delta}\nabla \psi\cdot A\nabla  u dxdt\\
   \leq &O(1) \int_0^T \left( \int_{\Gamma_1}\int_0^\delta \sigma \psi_r^2 +\mu |\nabla_{\Gamma_1} \psi |^2 drds \right)^{1/2}\left( \int_{\Omega} \sigma u_r^2+\mu |\nabla_{\Gamma_1} u|^2 drds\right)^{1/2}dt\\
    \leq & max \{O((\sigma\mu)^{1/4}), O(\frac{\sigma}{\delta})\},
 \end{split}\tag{4.11}
\end{equation*}
where we have used (\ref{size1}) and (\ref{s1}). So, $\mathcal{L}[v_1, \xi] = 0$, resulting in the boundary condition on $\Gamma_1 \times (0, T)$ 
\begin{equation*}
    k_1\frac{\partial v_1}{\partial\textbf{n}} = 0.
\end{equation*}

\smallskip

\noindent Subcase $(1ii)$. $\sqrt{\sigma\mu}\to \gamma \in (0,\infty)$. In this case, $h \to \infty$ and $\mu\delta \to \infty$ as $\delta \to 0$. It is clear to note that
\begin{equation*}
\begin{split}
    \mathcal{I} =&- \sqrt{\sigma\mu}\int_{\Gamma_1} [\Psi_R(s,h)u(s,\delta,t) -\Psi_R(s,0)u(s,0,t)]ds
    \to  \gamma \int_{\Gamma_1} (v_1+v_2)\mathcal{J}^\infty[g_1]ds,
\end{split}
\end{equation*}
as $\delta \to 0$ because of the weak convergence of $u \to v$ in $W^{1, 0}_2(\Omega_1 \times (0, T))$. 

A combination of (\ref{eq35}), (\ref{wksol}), (\ref{RHS}) and (\ref{s1}) yields
\begin{equation*}
\begin{split}
    \mathcal{L}[u,\xi] =& - \sqrt{\sigma\mu}\int_{\Gamma_1} [\Psi_R(s,h)u(s,\delta,t) -\Psi_R(s,0)u(s,0,t)]ds +O(\delta)
    \to  \gamma \int_{\Gamma_1} (v_1+v_2)\mathcal{J}^\infty[g_1]ds,
\end{split}
\end{equation*}
as $\delta \to 0$, which implies that $v$ satisfies the boundary condition on $\Gamma_1 \times (0, T)$
\begin{equation*}
  k_1\frac{\partial v_1}{\partial \textbf{n}} - k_2\frac{\partial v_2}{\partial \textbf{n}} = \gamma \mathcal{J}^{\infty}[v_1+v_2].  
\end{equation*}

To obtain the effective boundary conditions of $v_1$ on $\Gamma_1$, the remainder of this procedure is analogous to that in Subcase $(1i)$. By choosing the test function $\xi$ such that $\xi = 0$ in $\overline{\Omega}_2$, say, $g_2(s) \equiv 0$, on account of  (\ref{eq35}), (\ref{sizeII}), (\ref{sizeIII}), and (\ref{s1}), we produce
\begin{equation*}
\begin{split}
   \mathcal{L}[v_1,\xi]=&\int_{0}^{T} \int_{\Omega_1} (v_1)_t \xi dxdt + \int_{0}^{T}\int_{\Omega_1}\nabla v_1 \cdot A_0\nabla  \xi dxdt-\int_{0}^{T} \int_{\Omega_1}f \xi dxdt\\
   =& - \sqrt{\sigma\mu}\int_{\Gamma_1} [\Psi_R(s,h)u(s,\delta,t) -\Psi_R(s,0)u(s,0,t)]ds +O(\delta)\\
   \to & \gamma \int_{\Gamma_1} v_1\mathcal{J}_1^\infty[g_1]ds.
 \end{split}\tag{4.12}
\end{equation*}
Thus, due to the arbitrariness of the test function, $v_1$ satisfies the boundary condition on $\Gamma_1 \times (0, T)$
\begin{equation*}
  k_1\frac{\partial v_1}{\partial \textbf{n}}  = \gamma \mathcal{J}_1^{\infty}[v_1].
\end{equation*}
Immediately, we have $k_2\frac{\partial v_2}{\partial \textbf{n}}  = -\gamma \mathcal{J}^{\infty}[v_2] $ on $\Gamma_1 \times (0, T)$ since $\mathcal{J}_1^{\infty} = \mathcal{J}^{\infty}$.

\medskip

\noindent Subcase $(1iii)$. $\sigma\mu \to \infty$. In this case, $h \to \infty$ and $\mu\delta \to \infty$ as $\delta \to 0$. Then, divided by $\sqrt{\sigma\mu}$ on both sides of (\ref{wksol})  and sending $\delta \to 0$, in view of $ (\ref{wksol})-(\ref{s1})$, we obtain
\begin{equation*}
\begin{split}
    \int_{\Gamma_1} (v_1+v_2)\mathcal{J}^\infty[g_1]ds = 0,
\end{split}
\end{equation*}
from which we have  $\nabla_{\Gamma_1}(v_1+v_2)=0$ because the range of $\mathcal{J}^\infty[\cdot]$ contains eigenfunctions of $-\Delta_{\Gamma_1} $. 

The effective boundary conditions of $v_1$ on $\Gamma_1$ follow by taking the test function $\xi$ with $\xi = 0$ in $\overline{\Omega}_2$. Then, proceeding as in the above analysis, due to  $\mathcal{J}_2^\infty[g_1] = 0$, we get
\begin{equation*}
\begin{split}
    \int_{\Gamma_1} v_1\mathcal{J}_1^\infty[g_1]ds = 0,
\end{split}
\end{equation*}
meaning that $\nabla_{\Gamma_1} v_1 = \nabla_{\Gamma_1} v_2 = 0$ on $\Gamma_1$. This boundary condition indicates that $v$ is a constant on $\Gamma_1$ in the spatial variable, but it can be a function of $t$.
 
Assume further that $\xi=m(t)$ for some smooth function $m(t)$ on $\Gamma_1$, and $\psi(s,r,t) = m(t) (\delta - r)/\delta$. Carrying out a direct computation, we arrive at 
\begin{equation*}\label{eqno413}
\begin{split}
   \mathcal{L}[v_1,\xi]=&-\lim_{\delta\to 0}\int_{0}^{T}\int_{\Omega_\delta}\nabla \psi\cdot A\nabla  u dxdt\\
    =&\lim_{\delta\to 0}\int_{0}^{T}\int_{\Gamma_1}\frac{\sigma}{\delta} m(t) \int_0^\delta u_r (1+2H(s)r + \kappa(s)r^2) drdsdt\\
   =&\lim_{\delta\to 0}\int_{0}^{T}\int_{\Gamma_1}\frac{\sigma}{\delta} m(t) \left(u(s, \delta^-, t) - u(s, 0^+, t) \right)dsdt\\
   = &0,
 \end{split}\tag{4.13}
\end{equation*}
from which $v_1$ satisfies $ \int_{\Gamma_1} k_1\frac{\partial v_1}{\partial\textbf{n}} ds = 0$ on $\Gamma_1 \times (0, T)$.

\smallskip

Eventually, consider the test function $\xi$ with $\xi(s,   0^+, t) = m(t)$ being a constant in the spatial variable on $\Gamma_1$. By transmission conditions in (\ref{trans}), it turns out that
\begin{equation*}\label{414}
\begin{split}
    \int_{0}^{T}\int_{\Omega_\delta}A\nabla u \cdot \nabla  \xi dxdt
  =&-\int_{0}^{T}\int_{\Omega_\delta} \nabla \cdot (A \nabla u) \xi  dxdt+\int_{0}^{T}\int_{\partial\Omega_\delta}\xi  A \nabla u \cdot \textbf{n}  dsdt\\
  =&-\int_{0}^{T}\int_{\Omega_\delta} \nabla \cdot (A\nabla u) ( \xi- m(t)) dxdt\\
  &-\int_{0}^{T}\int_{\Omega_\delta}  \nabla \cdot (A \nabla u)  m(t) dxdt+\int_{0}^{T}\int_{\partial\Omega_\delta}\xi  A \nabla u \cdot \textbf{n}  dsdt \\
 =&\int_{0}^{T}\int_{\Omega_\delta} (u_t - f) ( \xi- m(t)) dxdt + \int_{0}^{T}\int_{\partial\Omega_\delta}(\xi - m(t) ) A \nabla u \cdot \textbf{n}  dsdt \\
\leq & \left|\int_{\varepsilon}^{T}\int_{\Omega_\delta} ( \xi- m(t)) (u_t-f) dxdt  \right| + \sigma\int_{0}^{T}\int_{\Gamma_2}( \xi- m(t)) \frac{\partial u}{ \partial \textbf{n}} dsdt\\
  \leq  & \left(\int_{\varepsilon}^{T}\int_{\Omega_\delta} ( \xi- m(t))^2dxdt  \right)^\frac{1}{2}\left(\int_{\varepsilon}^{T}\int_{\Omega_\delta} (u_t-f)^2 dxdt  \right)^\frac{1}{2}\\
   &\quad +\sigma\int_{0}^{T}\int_{\Gamma_1}\left( \xi(s, \delta^-, t)- m(t) \right) u_r(s,\delta^-,t)[1+2H\delta+\kappa \delta^2] dsdt\\
  \leq  & O(\delta^{1/2})+k_2\int_{\varepsilon}^{T}\int_{\Gamma_1}\left( \xi (s, \delta^-, t) - m(t)\right) u_r(s,\delta^+,t)[1+2H\delta+\kappa \delta^2] dsdt\\
\leq & O(\delta^{1/2})+O(\delta)\left( \int_{\varepsilon}^{T}\int_{\Gamma_1}u_r^2(s,\delta^+,t) dsdt \right)^{1/2}\\
\leq & O(\delta^{1/2})+O(\delta)\left( \int_{\varepsilon}^{T}\int_{\Omega_2} |D^2u|^2 dxdt \right)^{1/2},
\end{split}\tag{4.14}
\end{equation*}
where PDE (\ref{PDE}) was used, and the trace theorem holds from Theorem \ref{thm2}.

Consequently, we have $\mathcal{L}[v,\xi]=0$, leading to the boundary condition on $\Gamma_1 \times (0, T)$ 
$$
\int_{\Gamma_1} \left(k_1\frac{\partial v_1}{\partial\textbf{n}} - k_2\frac{\partial v_2}{\partial\textbf{n}}  \right) ds = 0.
$$

\bigskip

\noindent Case 2. $\frac{\sigma}{\delta}\to b \in (0, \infty)$ as $\delta \to 0$. Thus, $\sigma \to 0$ as $\delta \to 0$.

\noindent Subcase $(2i)$. $\sigma\mu\to 0$. In this case, $h \to 0$ and $\mu\delta \to 0$ as $\delta \to 0$. Due to (\ref{eq410}),  we obtain $\mathcal{L}[v,\xi] = 0$, from which $v$ satisfies the boundary condition on $\Gamma_1 \times (0, T)$ 
\begin{equation*}
    k_1\frac{\partial v_1}{\partial\textbf{n}} =  k_2\frac{\partial v_2}{\partial\textbf{n}}.
\end{equation*}

To derive effective boundary conditions for $v_1$ on $\Gamma_1$, we take the test function $\xi$ with $\xi \equiv 0$ in $\overline{\Omega}_2$ again. Thus, (\ref{size2}) gives
\begin{equation*}\label{eq414}
\begin{split}
    \mathcal{I} = &- \sqrt{\sigma\mu} \int_{\Gamma_1} \left(\Psi_R(s, h) -\Psi_R(s, 0) \right) u(s,\delta,t)ds -\sqrt{\sigma\mu}\int_{\Gamma_1}\Psi_R(s,0)\left(u(s,\delta,t)- u(s,0,t) \right)ds\\
    = &  \mu\delta\int_{\Gamma_1} \left( \frac{1}{2} \Delta_{\Gamma_1} (g_1 +g_2) + O(h) \right) u(s, \delta, t)ds\\
    &  - \int_{\Gamma_1}\left[\frac{\sigma}{\delta}\left((g_2 - g_1)+O( h^2)\right)\right] \left[u(s, \delta, t)- u(s, 0, t)\right]ds,
\end{split}\tag{4.15}
\end{equation*}
from which 
$$I \to   b \int_{\Gamma_1} g_1(v_2- v_1)ds \quad  \text{  as }  \quad \delta \to 0.
$$
Furthermore, (\ref{sizeII}) and (\ref{sizeIII}) yield
\begin{equation*}\label{eq415}
\begin{split}
    | II +III | \leq & O(\delta) \int_0^T \frac{1}{\sqrt{t}}\left(\int_{\Gamma_1} \sqrt{\sigma\mu} [\Psi_R(s,\delta)\psi(s,\delta,t) - \Psi_R(s,0)\psi(s,0,t)]ds \right)^{1/2}dt\\
    \to & 0 \quad  \text{  as }  \quad \delta \to 0.
\end{split}\tag{4.16}
\end{equation*}

Combining (\ref{eq414}) and (\ref{eq415}), we get
$$
\mathcal{L}[v_1, \xi] = b \int_0^T \int_{\Gamma_1} g_1(v_2- v_1)ds dt.
$$
Because of the arbitrariness of $\xi$, $v_1$ satisfies the boundary condition on $\Gamma_1 \times (0, T)$
\begin{equation*}
    k_1\frac{\partial v_1}{\partial\textbf{n}} =   b   (v_2- v_1).
\end{equation*}

\medskip

\noindent Subcase $(2ii)$. $\sqrt{\sigma\mu}\to \gamma \in (0,\infty)$. In this case, $\mu\delta \to \beta \in (0, \infty)$ and $h \to H= \beta /\gamma (\gamma / b) \in (0, \infty)$ as $\delta \to 0$.

It is a consequence of the weak convergence of $u$ that as $\delta \to 0$,
\begin{equation*}
\begin{split}
    \mathcal{I} =&- \sqrt{\sigma\mu}\int_{\Gamma_1} [\Psi_R(s,h)u(s,\delta,t) -\Psi_R(s,0)u(s,0,t)]ds
    \to  \gamma \int_{\Gamma_1} (v_1+v_2)\mathcal{J}^{\beta/\gamma}[g_1]ds.
\end{split}
\end{equation*}
Thanks to (\ref{eq35}) and (\ref{s1}), it follows from (\ref{wksol}) and (\ref{RHS}) that
\begin{equation*}\label{eqno417}
\begin{split}
    \mathcal{L}[u,\xi] =& - \sqrt{\sigma\mu}\int_{\Gamma_1} [\Psi_R(s,h)u(s,\delta,t) -\Psi_R(s,0)u(s,0,t)]ds +O(\delta)\\
    \to & \gamma \int_{\Gamma_1} (v_1+v_2)\mathcal{J}^{\beta/\gamma}[g_1]ds,
\end{split}\tag{4.17}
\end{equation*}
which suggests the boundary condition on $\Gamma_1 \times (0, T)$ 
\begin{equation*}\label{eqs418}
   k_1\frac{\partial v_1}{\partial\textbf{n}} - k_2\frac{\partial v_2}{\partial\textbf{n}} = \mathcal{J}^{\beta/\gamma}[v_1 + v_2]. \tag{4.18}
\end{equation*}

On the other hand, we understand the boundary condition of $v_1$ on $\Gamma_1$ by revising $\xi$ with $\xi \equiv 0$ in $\overline{\Omega}_2$. Comparably, employing the analysis similar to the one used in (\ref{eqno417}), we have 
\begin{equation*}
\begin{split}
   & \mathcal{L}[u_1,\xi] = - \sqrt{\sigma\mu}\int_{\Gamma_1} [\Psi_R(s,h)u(s,\delta,t) -\Psi_R(s,0)u(s,0,t)]ds +O(\delta)\\
    \to & \mathcal{L}[v_1,\xi] = \gamma \int_{\Gamma_1} \left( \mathcal{J}_1^{\beta/\gamma}[v_1]g_1 - \mathcal{J}_2^{\beta/\gamma}[v_1]g_2 \right) ds,
\end{split}
\end{equation*}
where we made use of the fact $\mathcal{J}_1^{\beta/\gamma}[\cdot]$ and $\mathcal{J}_2^{\beta/\gamma}[\cdot]$ are linear and symmetric. Thus, $v_1$ satisfies the boundary condition on $\Gamma_1 \times (0, T)$
\begin{equation*}\label{ebc1}
\begin{split}
   k_1\frac{\partial v_1}{\partial\textbf{n}}
    = \mathcal{J}_1^{\beta/\gamma}[v_1] - \mathcal{J}_2^{\beta/\gamma}[v_2].
\end{split}\tag{4.19}
\end{equation*}
Combining this with (\ref{eqs418}), we obtain the boundary condition of $v_2$ on $\Gamma_1$ as well
\begin{equation*}\label{ebc2}
\begin{split}
   k_2\frac{\partial v_2}{\partial\textbf{n}}
    = \mathcal{J}_2^{\beta/\gamma}[v_1] - \mathcal{J}_1^{\beta/\gamma}[v_2].
\end{split}\tag{4.20}
\end{equation*}

\medskip

\noindent Subcase $(2iii)$. $\sigma\mu \to \infty$. In this case, $h \to \infty$ and $\mu\delta \to \infty$ as $\delta \to 0$. By taking the test function $\xi $ satisfying $\xi \equiv 0$ on $\overline{\Omega}_2$, we are led to 
\begin{equation*}
\begin{split}
    \int_{\Gamma_1} v_1\mathcal{J}^\infty[g_1]ds = 0.
\end{split}
\end{equation*}
This represents that $\nabla_{\Gamma_1} v_1 = 0$ on $\Gamma_1 \times (0, T)$. Next, assume further that $\xi(s, 0^+, t) = m(t)$ on $\Gamma_1$ and $\psi(s,r,t) = m(t) \frac{\delta-r}{\delta}$. Then, implementing the same analysis in (\ref{eqno413}) gives 
$$\int_{\Gamma_1} \left( k_1\frac{\partial v_1}{\partial \textbf{n}} - b (v_2-v_1) \right) ds=0 \quad \text{ on } \quad  \Gamma_1 \times (0, T).
$$

\smallskip

Going back to the standard test function $\xi$ that is independent of $\delta$ and divided by $\sqrt{\sigma\mu}$ on both sides of (\ref{wksol}), by sending $\delta \to 0$, we then get
\begin{equation*}
\begin{split}
    \int_{\Gamma_1} (v_1 + v_2)\mathcal{J}^\infty[g_1]ds = 0.
\end{split}
\end{equation*}
Thus, it is apparent that $\nabla_{\Gamma_1} v_2 = 0$ on $\Gamma_1$. By further assumption that $\xi(s, 0^+, t) = m(t)$, it follows from (\ref{414}) that  $\mathcal{L}[v, \xi] = 0$, implying that $v$ satisfies the boundary condition on $\Gamma_1 \times (0, T)$
$$
\int_{\Gamma_1} \left( k_1\frac{\partial v_1}{\partial \textbf{n}} -  k_2\frac{\partial v_2}{\partial \textbf{n}} \right) ds=0.
$$

\bigskip

\noindent Case 3.  $\frac{\sigma}{\delta}\to \infty$ and $\sigma\delta^3 \to 0$ as $\delta \to 0$.

\smallskip

Before diving into several subcases, we conclude with $v_1$ and $v_2$.  By the weak convergence of $u \to v$ in $W_2^{1,0}(Q_T)$ as $\delta \to 0$, it follows that
\begin{equation*}
    \begin{split}
        \int_0^T\frac{\sigma}{\delta}\int_{\Gamma_1} (u(s,\delta,t) - u(s,0,t)) \xi(s,0,t) ds dt 
        = &\frac{\sigma}{\delta}\int_0^T \int_{\Gamma_1} \int_0^\delta u_r \xi(s,0,t) dr dsdt\\
        \leq &O\left(\sqrt{\frac{\sigma}{\delta}} \right) \left(\int_0^T\int_\Omega \nabla u \cdot A \nabla udxdt\right)^{1/2}\\
        \leq &O\left(\sqrt{\frac{\sigma}{\delta}} \right),
    \end{split}\tag{4.21}
\end{equation*}
in which we used H\"older inequality and Lemma \ref{est1}. Then divided by $\sqrt{\sigma/\delta}$ on both sides and sending $\delta \to 0$, we derive
\begin{equation*}
    \begin{split}
        &\int_0^T \int_{\Gamma_1} (v_2 - v_1) \xi(s,0,t) ds dt =0,
    \end{split}
\end{equation*}
which means that  $v_1=v_2$ on $\Gamma_1$ due to the arbitrariness of the test function.

\medskip

\noindent Subcase $(3i)(3ii). \sqrt{\sigma\mu} \to \gamma \in [0,\infty)$. In this case, $h\to 0$ and $\mu\delta \to 0$ as $\delta \to 0$. Moreover, if $\mu/\sigma \to c \in (0, \infty ]$ as $\delta \to 0$, then $\sigma\delta \to 0$ as $\delta \to 0$. Since
\begin{equation*}
    \begin{split}
        \sigma \int_0^T \int_{\Gamma_1}\frac{g_2-g_1}{\delta} \left[u(s, \delta,t)-u(s, 0,t)\right] ds dt
        =&\sigma \delta \int_0^T\int_{\Gamma_1} \frac{g_2-g_1}{\delta}\frac{1}{\delta}\left(\int_0^\delta u_r dr \right) ds dt
        \leq  O(\sqrt{\sigma \delta}),
    \end{split}
\end{equation*}
from (\ref{eq414}), we get $I \to 0$ as $\delta \to 0$. Because (\ref{eq415}) also holds in this case, we obtain $\mathcal{L}[v, \xi] = 0$, where $v$ satisfies the boundary condition on $\Gamma_1 \times (0, T)$
\begin{equation*}
k_1\frac{\partial v_1}{\partial\textbf{n}} = k_2\frac{\partial v_2}{\partial\textbf{n}}.
\end{equation*}

If $\mu/\sigma \to 0$ as $\delta \to 0$, then implementing the integration by parts and H\"older inequality, we get
\begin{equation*}\label{eqno418}
    \begin{split}
        \left|\sigma \int_0^\delta u_r dr
        -k_2 u_r(s,\delta^+,t)\delta\right|
        =& \sigma \left|\int_0^\delta r u_{rr} dr\right| \leq  O \left(\sigma^{1/2} \delta^{3/2} \right) \left(  \int_0^\delta \sigma u_{rr}^2 dr\right)^{1/2},
    \end{split} \tag{4.22}
\end{equation*}
where the transmission conditions (\ref{trans}) were used. Observe
\begin{equation*}
  \begin{split}
       \left(\sigma \int_0^\delta u_r dr \right)^2 
      &\leq   2\left(  \left| k_2 u_r(s, \delta^+,t)\delta \right|^2 +O(\sigma \delta^3)\int_0^\delta \sigma u_{rr}^2 dr \right),\\
  \end{split}
\end{equation*}
from which it follows that
\begin{equation*}\label{eqno423}
    \begin{split}
        \sigma \int_0^T \int_{\Gamma_1}\frac{g_2-g_1}{\delta} \left[u(s, \delta,t)-u(s, 0,t)\right] ds dt
        =&\sigma \delta \int_0^T\int_{\Gamma_1} \frac{g_2-g_1}{\delta}\frac{1}{\delta}\left(\int_0^\delta u_r dr \right) ds dt\\
        \leq& O(1)  \left(\int_0^T\int_{\Gamma_1} \left(\sigma\int_0^\delta u_r dr\right)^2  dsdt\right)^{1/2}\\
        \leq & O(\delta^{1/2}) \left(\int_0^T \int_{\Gamma_1} \delta  \left| k_2 u_r(s, \delta^+,t) \right|^2 +(\sigma\delta^2 + \delta^2 + \mu\delta^2) \right)^{1/2}\\
        \leq &O(1) \left( \delta^2 \int_{0}^{T}\int_{\Omega_2} |D^2u|^2 dxdt + (\sigma\delta^3 + \delta^3 + \mu\delta^3)\right)^{1/2},
    \end{split}\tag{4.23}
\end{equation*}
with the help of the trace theorem and Lemma \ref{lemm2.21}. 

By (\ref{eqno423}) and the assumption that $\sigma\delta^3 \to 0$, (\ref{eq414}) and (\ref{eq415}) give $\mathcal{L}[v,\xi] = 0$.
Thus, $v$ satisfies the boundary condition on $\Gamma_1 \times (0, T)$
$$
k_1\frac{\partial v_1}{\partial\textbf{n}} = k_2\frac{\partial v_2}{\partial\textbf{n}}.
$$

\medskip

\noindent Subcase $(3iii)$. $\sigma\mu \to \infty$. If $h \to H \in (0, \infty]$, then divided by $\sqrt{\sigma\mu}$ on both sides of (\ref{wksol})  and sending $\delta \to 0$, by (\ref{s1}) and (\ref{I}), we have
\begin{equation*}
\begin{split}
    \int_{\Gamma_1} v \mathcal{J}^{H}[g_1] ds = 0.
\end{split}
\end{equation*}
Using the analogous method as in Subcase $(1iii)$, we find that $ \nabla_{\Gamma_1} v = 0$ and
$$
\int_{\Gamma_1} \left(k_1\frac{\partial v_1}{\partial\textbf{n}} - k_2\frac{\partial v_2}{\partial\textbf{n}} \right) ds = 0.
$$

On the other hand, if $h \to 0$ as $\delta \to 0$, then we first consider the case that $\frac{\mu}{\sigma} \to c \in (0,\infty]$ as $\delta \to 0$. For the case that $\mu\delta \to \infty$, divided by $\sigma\delta$ on both sides of (\ref{wksol}) and sending $\delta\to 0$, from (\ref{eq414}) and (\ref{eq415}), we find that 
\begin{equation*}
    \int_0^T\int_{\Gamma_1}v(s,0,t)\Delta_{\Gamma_1}  \xi(s,0,t)dsdt=0,
\end{equation*}
revealing $ \nabla_{\Gamma_1} v = 0$. Thus, by the same method used in Subcase $(1iii)$, we have
$$
\int_{\Gamma_1} \left(k_1\frac{\partial v_1}{\partial\textbf{n}} - k_2\frac{\partial v_2}{\partial\textbf{n}} \right) ds = 0.
$$

For the case that $\mu\delta \to \beta \in [0, \infty)$, a combination of  (\ref{eq414}), (\ref{eq415}) and (\ref{eqno423}) leads to
\begin{equation*}\label{eqno424}
   \mathcal{L}[v,\xi] = \beta \int_{\Gamma_1} v \Delta_{\Gamma_1} g_1 ds,\tag{4.24}
\end{equation*}
in which we are left to prove $v\in L^2\left((0,T); H^1(\Gamma_1)\right)$ by the weak solution in Definition \ref{def2}, showing that $v$ is a weak solution of (\ref{EPDE}) together with the boundary condition on $\Gamma_1 \times (0, T)$
\begin{equation*}\label{eqno425}
     k_1\frac{\partial v_1}{\partial \textbf{n}}-k_2\frac{\partial v_2}{\partial \textbf{n}}= \beta \Delta_{\Gamma_1} v.  \tag{4.25}
\end{equation*}

In the following, assume $\overline{v}$ is the weak solution of (\ref{EPDE}) with boundary condition (\ref{eqno425}) that also satisfies (\ref{eqno424}). Then, it suffices to show $v=\overline{v}$. 

Let us consider the problem (\ref{EPDE}) for $w = v-\overline{v}$ since $v$ belongs to all spaces in the first step by the compactness argument. Therefore, $ w$ is the weak solution of (\ref{EPDE}) with $u_0=f=0$. 

Fix $t_1\in (t_0,T)$ for any small $t_0\in (0,T)$,  (\ref{eqno424}) is transformed into
\begin{equation*}\label{eq416}
\begin{split}
    \int_{t_0}^{t_1} \int_{\Omega}(w_t\xi+A_0\nabla w \cdot \nabla \xi) dxdt&=\beta\int_{t_0}^{t_1} \int_{\Gamma_1} w(s,0,t) \Delta_{\Gamma_1} \xi(s,0,t) dsdt.\\
\end{split}\tag{4.26}
\end{equation*}
Then, choose the test function $\xi \in C^2(\overline{\Omega} \times [0,T])$ in the curvilinear coordinates $(s,r)$
\begin{equation*}
   \xi(s,r,t)=\left\{
\begin{array}{cc}
     \omega(s,t) \eta(r), & -2\epsilon \leq r \leq 2 \epsilon, \\
        0, & \text{otherwise},
\end{array}
\right.
\end{equation*}
such that $\eta$ is a smooth cut-off function in the $r$ variable with $0\leq \eta\leq 1$, $\eta=1$ for $|r| \leq \epsilon$, and $\eta=0$ for $ |r| \geq 2\epsilon$, where $\epsilon$ is small; $\omega(s,t) \in C^2(\Gamma_1 \times [0,T])$. 

From (\ref{eq416}), we are led to
\begin{equation*}\label{eq417}
    \begin{split}
        &\beta \left|\int_{t_0}^{t_1} \int_{\Gamma_1} w\Delta_{\Gamma_1} \xi dsdt \right|\\
        =&\left|\int_{t_0}^{t_1} \int_{\Omega}(w_t\xi+ A_0\nabla w  \cdot \nabla \xi) dxdt\right |\\
        \leq & C \left(\| w \|_{W^{1,1}_2(\Omega_1 \times (t_0,t_1))} + \| w \|_{W^{1,1}_2(\Omega\backslash\overline{\Omega}_1 \times (t_0,t_1))} \right)\| \omega\|_{L^2((t_0,t_1);H^1(\Gamma_1))}\\
        \leq &O(1),
    \end{split}\tag{4.27}
\end{equation*}
which results from that $w \in W^{1,1}_2(\Omega_1 \times (t_0,t_1))$ and  $W^{1,1}_2(\Omega\backslash\overline{\Omega_1} \times (t_0,t_1)) $.

Furthermore, considering $\omega$ with
\begin{equation*}\label{eq418}
    \begin{split}
        \int_{t_0}^{t_1} \int_{\Gamma_1} \omega dsdt &=0,
    \end{split}\tag{4.28}
\end{equation*}
we define the linear functional on that $\omega$ as
\begin{equation*}
    \begin{split}
       \omega \to  \int_{t_0}^{t_1} \int_{\Gamma_1} w \Delta_{\Gamma_1} \omega dsdt,
    \end{split}
\end{equation*}
which is well-defined by $(\ref{eq417})$ as well. Then, this functional can be extended to the Hilbert space
\begin{equation*}
    \begin{split}
       \mathbb{H} = \{ \omega \in L^2\left((t_0,t_1);H^1(\Gamma_1)\right):  \int_{t_0}^{t_1}\int_{\Gamma_1}\omega  dsdt &=0\}
    \end{split}
\end{equation*}
with the inner product as
\begin{equation*}
    \begin{split}
       -\int_{t_0}^{t_1} \int_{\Gamma_1} \nabla_{\Gamma_1} \omega_1 \cdot \nabla_{\Gamma_1} \omega_2 dsdt.
    \end{split}
\end{equation*}
By Riesz representation theorem, there is some $z \in \mathbb{H} $ satisfying
\begin{equation*}\label{eq429}
    \begin{split}
       -\int_{t_0}^{t_1} \int_{\Gamma_1}  \nabla_{\Gamma_1} z \cdot  \nabla_{\Gamma_1} \omega dsdt=& \int_{t_0}^{t_1} \int_{\Gamma_1} w \Delta_{\Gamma_1} \omega dsdt\\ =&\int_{t_0}^{t_1} \int_{\Gamma_1} z \Delta_{\Gamma_1} \omega dsdt.
    \end{split}\tag{4.29}
\end{equation*}
Eventually, we infer from (\ref{eq429}) that
\begin{equation*}\label{eq430}
    \begin{split}
       \int_{t_0}^{t_1} \int_{\Gamma_1} (w-z) \Delta_{\Gamma_1} \omega dsdt= 0. 
    \end{split} \tag{4.30}
\end{equation*}

By Riesz theorem, (\ref{eq430}) indicates that $w - z = m(t)$ for some function $m(t) \in \mathbb{H}$ and thus $ w \in L^2\left((0, T);H^1(\Gamma_1)\right)$. Moreover, employing integration by parts, it follows from (\ref{eq416}) that 
\begin{equation*}
    \begin{split}
       \int_{\Omega} w^2 (x,t_1) dxdt \leq \int_{\Omega} w^2 (x,t_0) dxdt.
    \end{split} 
\end{equation*}
Sending $t_0 \to 0$, the assertion (\ref{eqno425}) follows.

\medskip

We now move on to the remaining case where  $\frac{\mu}{\sigma} \to 0$ with the assumption that $\sigma\delta^3 \to 0$ as $\delta \to 0$. In this case, $h \to 0$ as $\delta \to 0$. 

If $\mu\delta \to \beta \in [0, \infty)$ as $\delta \to 0$, then (\ref{eqno424}) follows immediately from  (\ref{eq414}), (\ref{eq415}), and (\ref{eqno423}). Consequently, the method used above also applies to this case, giving rise to the boundary condition on $\Gamma_1 \times (0, T)$
\begin{equation*}
     k_1\frac{\partial v_1}{\partial \textbf{n}}-k_2\frac{\partial v_2}{\partial \textbf{n}}= \beta \Delta_{\Gamma_1} v. 
\end{equation*}

\medskip

Finally, we treat the case that  $\mu\delta \to \infty$ as $\delta \to 0$. If $h \to 0$ as $\delta \to 0$, then divided  by $\mu \delta$ on both sides of (\ref{wksol}) and sending $\delta \to 0$, in view of (\ref{eq414}) and (\ref{eq415}), we have 
\begin{equation*}
\begin{split}
    \int_{\Gamma_1} v \Delta_{\Gamma_1} g_1 ds = 0.
\end{split}
\end{equation*}

This condition  suggests that $\nabla_{\Gamma_1} v = 0$ on $\Gamma_1 \times (0,T)$ since the range of $-\Delta_{\Gamma_1}$ contains its eigenfunctions. As before, assume further that $\xi(s, 0, t) = m(t)$. According to (\ref{414}), we have  $\mathcal{L}[v, \xi] = 0$, meaning that $v$ satisfies the boundary condition on $\Gamma_1 \times (0, T)$
$$
\int_{\Gamma_1} \left( k_1\frac{\partial v_1}{\partial \textbf{n}} -  k_2\frac{\partial v_2}{\partial \textbf{n}} \right) ds=0.
$$

\smallskip

In conclusion, this completes the proof of the theorem by what we have already proven.
\end{proof}


\section*{Acknowledgments} The author is grateful to his advisor Professor Xuefeng Wang for his patient discussions. The author also thanks  the anonymous referees for their helpful comments and suggestions.


%
%
%
%
%
%
%
%
%
%
%
%
\

\end{document}